\numberwithin{equation}{section}
\DeclareMathOperator{\E}{\mathbb{E}}
\DeclareMathOperator*{\re}{Re}
\def \C {\mathbb{C}}
\def \N {\mathbb{N}}
\def \P {\mathbb{P}}
\def \R {\mathbb{R}}
\def \E {\mathbb{E}}
\def \D {\mathbb{D}}
\def \HH {\mathcal{H}}
\def \vol {{\rm vol}}
\def \Id {{\rm Id}}
\def \etc {,\ldots,}
\newtheorem{theorem}{Theorem}[section]
\newtheorem{proposition}[theorem]{Proposition}
\newtheorem{corollary}[theorem]{Corollary}
\newtheorem{lemma}[theorem]{Lemma}
\theoremstyle{remark}
\begin{document}

\title{Maximal sections of the unit ball of $\ell^n_p(\C)$ for $p > 2$}

\author{Jacek Jakimiuk and Hermann K\"onig}
\thanks{J.J.’s research was supported by the National Science
Centre, Poland, grant 2018/31/D/ST1/0135.}

\keywords{Volume, hyperplane sections, $\ell_p$-ball, random variables}
\subjclass[2020]{Primary: 52A38, 52A40 Secondary: 46B07, 60F05}

\begin{abstract}
Eskenazis, Nayar and Tkocz \cite{ENT} have shown recently some resilience of Ball's celebrated cube slicing theorem, namely its analogue in $\ell^n_p$ for large $p$. We show that the complex analogue, i.e. resilience of the polydisc slicing theorem proven by Oleszkiewicz and Pelczy\'nski, holds for large $p$ and small $n$, but does not hold for any $p > 2$ and large $n$.
\end{abstract}

\maketitle

\section{Introduction and main results}

Calculating the volume of sections of convex sets by subspaces is not an easy problem, even for classical convex bodies. The Busemann-Petty problem, see e.g. Koldobsky \cite{K}, and the hyperplane conjecture, cf. Milman, Pajor \cite{MiP}, gave an impetus to study hyperplane sections of convex bodies in detail. In a celebrated  paper Ball \cite {B} proved that the hyperplane section of the $n$-cube perpendicular to $a^{(2)} = \frac 1 {\sqrt 2} (1,1,0 \etc 0) \in S^{n-1} \subset \R^n$ has maximal volume among all hyperplane sections. Using the Brascamp-Lieb inequality, in the paper \cite{B2} he generalized this result to $k$-codimensional sections of the $n$-cube, $1 \le k \le n-1$. Earlier Hadwiger \cite{Ha} and Hensley \cite{He} had shown independently of one another that coordinate hyperplanes e.g. orthogonal to $a^{(1)} = (1,0 \etc 0) \in S^{n-1}$ yield the minimal $(n-1)$-dimensional cubic sections. Vaaler \cite{V} generalized this to $k$-codimensional cubic sections, $1 \le k \le n-1$. Chasapis, Nayar and Tkocz \cite{CNT} proved a dimension-free stability result for these upper and lower bounds of hyperplane cubic sections. Nayar and Tkocz gave an excellent survey on sections and projections of convex bodies, see \cite{NT}.\\

Meyer and Pajor \cite{MP} found the extremal sections of the $\ell_p^n$ balls $B_p^n$: they proved that the normalized volume of sections of $B_p^n$ by a fixed $k$-codimensional subspace is monotone increasing in $p$. This implies that coordinate hyperplanes provide the minimal sections for $2 \le p < \infty$, as for $p=\infty$, and the maximal sections for $1 \le p \le 2$. Their result is also valid in the complex case $B_p^n(\C)$. The minimal hyperplane sections of $B_1^n$ are those orthogonal to a main diagonal, e.g. to $a^{(n)} = \frac 1 {\sqrt n}(1 \etc 1) \in S^{n-1}$, see also \cite{MP}. Koldobsky \cite{K} extended this to
$1 \le p \le 2$. His argument also covers the range $0 < p < 1$. This left open the case of the maximal hyperplane sections of $B_p^n$ for $2 < p < \infty$. The situation there is more complicated, since then the maximal hyperplane may depend as well on $p$ as on the dimension $n$. Oleszkiewicz \cite {O} proved that Ball's result does not transfer to the balls $B_p^n$ if $2 < p < p_0 \simeq 26.265$: the intersection of the hyperplane perpendicular to $a^{(n)}$ has larger volume than the one orthogonal to $a^{(2)}$, for sufficiently large dimensions $n$. On the other hand, recently Eskenazis, Nayar and Tkocz \cite{ENT} proved that Ball's result is stable for $\ell_p^n$ and very large $p$: $(a^{(2)})^\perp \cap B_p^n$ is the maximal hyperplane section of $B_p^n$ for all dimensions, provided that $p_1 := 10^{15} < p < \infty$. In the sequel, we may refer to this phenomenon as ``resilience of cubic sections". The paper K\"onig \cite{Ko} further studies the case $p_0 < p < \infty$. \\

The complex analogue of Ball's result was shown by Oleszkiewicz and Pelczy\'nski \cite {OP}: In the case of the polydisc $B_\infty^n(\C)$, i.e. the complex $\ell_\infty^n$-ball, the complex hyperplane orthogonal to $a^{(2)}$ still yields the maximal complex hyperplane section of $B_\infty^n(\C)$. For volume considerations $\C^n$ is identified with $\R^{2n}$. However, for the normalized polydisc $\tilde{B}_\infty^n(\C) = \frac 1 {\pi} B_\infty^n(\C)$ we have
$$\vol_{2(n-1)}\left( \left( a^{(2)} \right)^\perp \cap \tilde{B}_\infty^n(\C) \right) = \lim_{n \to \infty} \vol_{2(n-1)}\left( \left( a^{(n)} \right)^\perp \cap \tilde{B}_\infty^n(\C) \right),$$
so that $\left( a^{(n)} \right)^\perp \cap \tilde{B}_\infty^n(\C)$ barely misses to have maximal volume for large dimension $n$. In a stability result for polydisc slicing shown recently by Glover, Tkocz and Wyczesany \cite {GTW}, estimating the volume $\vol_{2(n-1)}\left( a^\perp \cap B_\infty^n(\C) \right)$ for unit vectors $a$ close to $a^{(2)}$ requires a fourth order term $||a||_4$ in addition to $\left| a-a^{(2)} \right|$, where $| \cdot |$ denotes the Euclidean norm. Both facts indicate that, in contrast to the real case, ``resilience of polydisc slicing" may be violated for the complex $\ell_p^n$-unit ball $B_p^n(\C)$ for all $2 < p < \infty$ and large dimensions $n$. We prove exactly this, also giving an estimate for the dimensions $n$ needed to have
\begin{align*}
    \vol_{2(n-1)}\left( \left( a^{(n)} \right)^\perp \cap B_\infty^n(\C) \right) > \vol_{2(n-1)}\left( \left( a^{(2)} \right)^\perp \cap B_\infty^n(\C) \right).
\end{align*}
Let $1 \le p \le \infty$, $n \in \N$ and $a \in \C^n$, $|a| = 1$. We use the notation
$$A_{n,p}(a) := \frac{\vol_{2(n-1)}\left( a^\perp \cap B_p^n(\C) \right)}{\vol_{2(n-1)}\left( B_p^{n-1}(\C) \right)} $$
for the normalized section volume of $B_p^n(\C)$ perpendicular to $a$.

\begin{theorem}\label{th1}
Let $2 < p < \infty$, $n \in \N$ and $a \in \C^n$, $|a| = 1$. Then for every $2 < p < \infty$ there is a constant $N(p)>0$ such that for all $n \ge N(p)$,
$$A_{n,p}\left( a^{(n)} \right) > A_{n,p}\left( a^{(2)} \right). $$
In fact, this holds with $N(p) = \frac 5 2 p$ if $p \ge 9$ and with $N(p) = p$ if $p \ge 140$. We have for all $2 < p < \infty$ that
$$\lim_{n \to \infty} A_{n,p}\left( a^{(n)} \right) = 2 \frac{\Gamma\left( 1+ \frac 2 p \right)^2}{\Gamma\left( 1 + \frac 4 p \right)} > A_{n,p}\left( a^{(2)} \right) = 2^{1-\frac 2 p}. $$
\end{theorem}

There is a dichotomy between non-resilience of polydisc slicing for large dimensions $n$ and resilience of polydisc slicing for small dimensions $n$ and large $p$: for large $p$ and relatively low dimensions $n$ the fourth order term in the estimation of the volume $\vol_{2(n-1)}\left( a^\perp \cap B_\infty^n(\C) \right)$ by Glover, Tkocz and Wyczesany is bounded from below and thus is negligible in the stability estimation of \cite{ENT}. In this situation the proof of  Eskenazis's, Nayar's and Tkocz's theorem can be adjusted to the complex case. We prove a complex analogue of the Eskenazis-Nayar-Tkocz theorem for dimensions $n$ low relative to $p$, i.e. if $n \leq cp$ for some universal constant $c$.

\begin{theorem}\label{th2}
    Let $p > p_2 := 10^{56}$ and $n < N(p) := \frac{p}{1520}$. Then for $a \in \C^n$, $|a| = 1$ the inequality
    $$A_{n,p}(a) \leq A_{n,p}\left( a^{(2)} \right) = 2^{1 - \frac{2}{p}}$$
    holds.
\end{theorem}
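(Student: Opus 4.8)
The plan is to carry over the Fourier-analytic argument of Eskenazis--Nayar--Tkocz \cite{ENT} to the complex radial setting; the essential new feature is that a Gaussian (central-limit) upper bound is now too weak on its own and must be sharpened by a fourth-order correction, whose size is precisely what limits the dimensions to $n<p/1520$. I begin by recording the complex analogue of Ball's section formula. Let $\mu_p$ be the rotation-invariant probability measure on $\C\cong\R^2$ with density proportional to $e^{-|z|^p}$, and let $\varphi_p$ be its Fourier transform, a radial characteristic function: $\varphi_p(0)=1$, $|\varphi_p|\le1$, and $\varphi_p(r)=\tfrac{p}{\Gamma(2/p)}\int_0^\infty J_0(rs)\,e^{-s^p}\,s\,ds$, which degenerates as $p\to\infty$ to the Oleszkiewicz--Pelczy\'nski polydisc kernel \cite{OP} $\varphi_\infty(r)=2J_1(r)/r$. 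After rotating coordinates so that $a_j\ge0$, Ball's Laplace-transform device gives
\[
A_{n,p}(a)=\tfrac{\Gamma(1+2/p)}{2}\int_0^\infty\prod_{j=1}^n\varphi_p(a_j r)\,r\,dr ,
\]
independently of $n$, so the theorem reduces to $\Phi_p(a)\le\Phi_p(a^{(2)})$, where $\Phi_p(a):=\int_0^\infty\prod_{j=1}^n\varphi_p(a_jr)\,r\,dr$ and $\Phi_p(a^{(2)})=4/\bigl(2^{2/p}\Gamma(1+2/p)\bigr)$.

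The technical core is a set of estimates on $\varphi_p$ uniform for $p\ge p_2=10^{46}$: (i) a Gaussian domination $\varphi_p(r)\le e^{-\kappa_p r^2}$ on an interval $[0,r_1]$ with the \emph{exact} constant $\kappa_p=\Gamma(1+4/p)/\bigl(8\,\Gamma(1+2/p)\bigr)$, so $\kappa_\infty=\tfrac18$; (ii) a fourth-order refinement: writing $-\log\varphi_p(r)=\kappa_pr^2+\tau_pr^4+O(r^6)$ near $0$ one has $\tau_p>0$ (e.g. $\tau_\infty=\tfrac1{384}$), and more globally $s\mapsto\log\varphi_p(\sqrt s\,r)$ is concave on a suitable range of $(s,r)$; (iii) a tail bound $|\varphi_p(r)|\le Cr^{-3/2}$ for $r\ge r_1$. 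All three follow from elementary estimates on the Hankel-type oscillatory integral above, with Bessel asymptotics in the role of the one-dimensional stationary-phase bounds of \cite{ENT}; the real work is to keep every constant explicit so that the threshold $p/1520$ can be extracted.

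Next I would run the dichotomy on $t^\ast=\|a\|_\infty$ about a threshold $t_0\asymp1$. In the flat regime $t^\ast<t_0$ all coordinates are small, so for $r\le r_1/t_0$ the concavity in (ii) makes $\prod_j\varphi_p(a_jr)=\exp\!\bigl(\sum_j\log\varphi_p(\sqrt{a_j^2}\,r)\bigr)$ a Schur-concave function of $(a_1^2,\dots,a_n^2)$ on the simplex, hence maximal at $a=a^{(n)}$, while the contribution of $r>r_1/t_0$ is negligible by (iii). This reduces the flat case to proving $\Phi_p(a^{(n)})\le\Phi_p(a^{(2)})$ for $n<p/1520$. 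In the peaked regime $t^\ast\ge t_0$ the dominant coordinate pushes the integrand into its tail already for moderate $r$, localizing the integral; $\Phi_p$ is then controlled near $a^{(2)}$ and at the boundary of this region by a direct perturbative argument, in the spirit of Oleszkiewicz--Pelczy\'nski's analysis of the polydisc, and $n$ plays no role.

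It remains to compare $\Phi_p(a^{(n)})$ with $\Phi_p(a^{(2)})$. Since $\varphi_p(r/\sqrt n)^n=e^{-\kappa_pr^2}\bigl(1-\tfrac{\tau_pr^4}{n}+O(n^{-2})\bigr)$ on the Gaussian range, $\Phi_p(a^{(n)})$ increases in $n$ to $\tfrac1{2\kappa_p}$, and the decisive quantity is the fixed gap
\[
\tfrac1{2\kappa_p}-\Phi_p(a^{(2)})=\Phi_p(a^{(2)})\Bigl(\tfrac{2^{2/p}\,\Gamma(1+2/p)^2}{\Gamma(1+4/p)}-1\Bigr)\ \asymp\ \tfrac1p ,
\]
which must outweigh the defect $\tfrac1{2\kappa_p}-\Phi_p(a^{(n)})\asymp\tfrac1n$; this is exactly what forces $n=O(p)$, and after bookkeeping of the constants (and of the losses incurred in the dichotomy) it yields the range $n<p/1520$ once $p>10^{46}$. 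I expect the main obstacle to be precisely this quantitative balancing act. Conceptually it is clear why one cannot reach all $n$, unlike in \cite{ENT}: the flat limit $\tfrac1{2\kappa_p}$ strictly exceeds $\Phi_p(a^{(2)})$, but only by $O(1/p)$, so a correction of order $1/n$ can compensate only when $n=O(p)$. Making this rigorous needs the estimates (ii)--(iii) with honest constants over a wide enough window, and it needs everything discarded along the way — the tails, and the gap between log-concavity and genuine Schur-concavity — to remain within the $O(1/p)$ budget.
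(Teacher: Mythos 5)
Your proposal takes a genuinely different route from the paper. The paper uses the probabilistic representation $A_{n,p}(a)=\Gamma(1+\tfrac2p)\,\E|\sum a_jR_j\xi_j|^{-2}$ (Proposition~\ref{prop1}), a $p$-Lipschitz estimate $|A_{n,p}(a)-A_{n,\infty}(a)|<16/p$ (Proposition~\ref{p-lipschitz}), the Glover--Tkocz--Wyczesany polydisc stability theorem as a black box, a dichotomy on $\delta(a)=|a-a^{(2)}|^2$ (not on $\|a\|_\infty$), and an induction on dimension following \cite{ENT}. You propose instead to work purely with the Bessel-kernel formula (the paper's Proposition~\ref{prop2}), a dichotomy on $\|a\|_\infty$, Schur-concavity of the integrand in the flat regime, and a perturbative analysis in the peaked regime. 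Your diagnosis of \emph{why} the result fails for $n\gtrsim p$ --- the fixed $O(1/p)$ gap between $1/(2\kappa_p)$ and $\Phi_p(a^{(2)})$ must dominate a $O(1/n)$ correction --- is exactly right and matches the mechanism behind Theorem~\ref{th1}. But the proposal has real gaps at the two places where the hard work actually lives.

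First, the Schur-concavity claim. Concavity of $s\mapsto\log\varphi_p(\sqrt s\,r)$ holds for $r$ small (the fourth cumulant of the complex $\mu_p$-measure is negative for $p>2$, which matches your $\tau_p>0$), but $\varphi_p$ oscillates and vanishes for $r$ of order one (the first zero of $J_1$, hence of $\varphi_\infty$, is near $3.83$), so $\log\varphi_p$ is undefined there and pointwise Schur-concavity of the full integrand fails --- as it must, otherwise $\Phi_p(a^{(n)})\ge\Phi_p(a^{(2)})$ would hold for all $n$, contradicting the theorem. You therefore have to truncate at some $r_1$ and control the tail $\int_{r_1}^\infty$. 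That tail is not pointwise Schur-concave, so the reduction $\Phi_p(a)\le\Phi_p(a^{(n)})+(\text{tail}(a)-\text{tail}(a^{(n)}))$ leaves a tail \emph{difference} that must be bounded within your $O(1/p)$ budget uniformly over all flat $a$ and all $n<p/1520$; the tail bounds you have available (cf.\ the $0.854^n$ bound in the proof of Theorem~\ref{th1}) decay only exponentially in $n$ and are not small for moderate $n$, so for small $n$ this is not automatic.

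Second, the peaked regime. You assert that near $a^{(2)}$ a ``direct perturbative argument, in the spirit of Oleszkiewicz--Pelczy\'nski'' applies and that ``$n$ plays no role.'' This is where the real difficulty is hiding: a unit vector with $\|a\|_\infty\ge t_0$ can still have $n-2$ small nonzero coordinates, and the second-order deficit near $a^{(2)}$ degenerates along the directions that spread mass thinly, so the size of $\|a\|_4$ (hence of $n$) genuinely enters the stability estimate. This is precisely the content of the Glover--Tkocz--Wyczesany stability bound $A_{n,\infty}(a)\le 2-\min\{10^{-40}\sqrt{\delta(a)},\tfrac1{76n}\}$, with the $\tfrac1{76n}$ branch active exactly when $a$ is close to $a^{(2)}$ but flat in the remaining coordinates; the paper's proof calls on this result and on an induction on $n$ with the event $\mathcal E=\{R_1\le1,|R_1-R_2|<\alpha,|a_1\xi_1+a_2\xi_2|<\tfrac14\alpha\}$, neither of which appears in your sketch. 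To make your peaked case rigorous you would essentially have to reprove that stability result from the kernel formula, which is a substantial separate project. Combined with the deferred constant bookkeeping, these are the points at which the proposal, as written, does not yet yield a proof.
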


{\it Remarks.} i) The constants in Theorem \ref{th2} are far from being optimal. The proof of Theorem \ref{th2} is similar to the proof of Eskenazis, Nayar, Tkocz \cite{ENT}, Theorem 1. It uses the stability result by Glover, Tkocz, Wyczesany \cite{GTW}, Theorem 1, and proceeds by induction on the dimension. \\

ii) To prove Theorem \ref{th1}, we use a formula for $A_{n,p}(a)$ similar to the one used in the real case by Eskenazis, Nayar and Tkocz \cite {ENT}, Proposition 6, the central limit theorem and direct error estimates for $A_{n,p}\left( a^{(n)} \right) - \lim_{m \to \infty} A_{m,p}\left( a^{(m)} \right).$\\

iii) Concerning the restriction $p \ge 9$ in Theorem \ref{th1}, we remark that \\
$\lim_{p \searrow 2} 2 \frac{\Gamma\left( 1 + \frac 2 p \right)^2}{\Gamma\left( 1 + \frac 4 p \right)} =
\lim_{p \searrow 2} 2^{1 - \frac 2 p} = 1$, and the statement $A_{n,p}\left( a^{(n)} \right) > A_{n,p}\left( a^{(2)} \right)$ for all $n \ge N(p)$ in Theorem \ref{th1} also holds for $2 < p < 9$ when $N(p) \le \frac c {p-2}$ is satisfied for some absolute constant $c>0$. \\

In section 2 we give some preliminary results, in section 3 we verify Theorem \ref{th1}, and in section 4 we prove Theorem \ref{th2}. \\

\section{Preliminary results}

We start with a complex analogue of Proposition 6 of Eskenazis, Nayar and Tkocz \cite {ENT}. Using the same notation $A_{n,p}(a)$ as in Theorem \ref{th1}, we have

\begin{proposition}\label{prop1}
Let $1 \le p < \infty$, $n \in \N$ and $(\xi_j)_{j=1}^n$ be i.i.d. random vectors uniformly distributed on the sphere $S^3 \subset \R^4$ and $(R_j)_{j=1}^n$ be i.i.d. random variables with density $c_p^{-1} t^{p+1} \exp(-t^p)$ on $[0,\infty)$, $c_p := \frac 1 p \Gamma\left( 1+\frac 2 p \right)$, independent of the  $(\xi_j)_{j=1}^n$. Then for any $a = (a_j)_{j=1}^n \in S^{n-1} \subset \R^n$ we have
$$A_{n,p}(a) = \Gamma\left( 1+\frac 2 p \right) \ \E_{\xi,R} \left| \sum_{j=1}^n a_j R_j \xi_j \right|^{-2}. $$
\end{proposition}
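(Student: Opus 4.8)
The plan is to start from the standard polar-integration formula for a central hyperplane section of a convex body and then write the volume of $B_p^n(\mathbb{C})$, viewed in $\R^{2n}$, as a probabilistic expectation. Recall the classical identity (going back to the real case in \cite{ENT}, Proposition 6): for a symmetric convex body $K\subset\R^d$ with $0\in\mathrm{int}\,K$ and a unit vector $\theta$, one has
$$\vol_{d-1}(\theta^\perp\cap K)=\frac{1}{\pi}\int_{\R}\Big(\int_{\R^d} e^{i\,t\,\langle x,\theta\rangle}\,\mathbf 1_{K}(x)\,dx\Big)\,dt,$$
or, in the form better suited here, the "smoothing/Fourier" representation of a section volume as a constant times $\E|Z|^{-1}$ where $Z$ is a suitable one-dimensional marginal. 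The first step is therefore to recall this representation and specialize $K=B_p^n(\mathbb{C})\subset\R^{2n}$, $d=2n$, and $\theta=(a,0)$ corresponding to $a\in S^{n-1}\subset\R^n\subset\C^n$; the inner product $\langle x,\theta\rangle$ then becomes the real part $\re\langle z,a\rangle_{\C^n}$, but since $B_p^n(\C)$ is invariant under coordinatewise rotation $z_j\mapsto e^{i\phi}z_j$ one may average over phases and reduce to a rotationally symmetric situation. Since the section is complex ($(n-1)$ complex dimensions, i.e.\ $2(n-1)$ real dimensions), the correct power will be $|Z|^{-2}$ rather than $|Z|^{-1}$; this is exactly the source of the exponent $-2$ and of the squared Gamma factor in the claimed formula.

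The second step is to introduce the probabilistic decomposition of the uniform measure on $B_p^n(\C)$. Here the key fact is the standard "$l_p$ representation": if $g_1,\dots,g_n$ are chosen so that each complex pair $(\re z_j,\im z_j)$ has the density proportional to $\exp(-\|z\|_p^p)$ on $\C^n\cong\R^{2n}$, then the normalized vector is uniform on the boundary sphere and, more usefully, writing each block in polar form $z_j=\rho_j\,\xi_j$ with $\xi_j$ uniform on $S^3\subset\R^4$ — wait, $\xi_j$ should be uniform on the unit sphere of $\R^2$, i.e. $S^1$. Let me reconsider: actually the proposition states $\xi_j$ uniform on $S^3\subset\R^4$, which suggests each "coordinate" $z_j$ is itself two-complex-dimensional, or that a different grouping is used. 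The plan is to group the $2n$ real coordinates into $n$ blocks of $4$, which happens because the Fourier/section computation naturally pairs up two real dimensions from the "section direction bookkeeping" with the two real dimensions of $z_j$; concretely one writes each contribution as a vector in $\R^4$ with radial part $R_j$ distributed with density $c_p^{-1}t^{p+1}e^{-t^p}$ (the exponent $p+1=4-1+(p-2)$ encodes "$\dim=4$, weight $e^{-t^p}$") and angular part $\xi_j$ uniform on $S^3$. So the second step is: justify that the probability density $\propto\exp(-\|z\|_p^p)$ on $\R^{2n}$, after the standard Gaussian-type "completion" used to convert a section integral into a full-space integral (this is what produces the extra two real dimensions per block and hence $S^3$), decomposes in polar coordinates block-by-block as $R_j\,\xi_j$ with the stated marginals, with the $(R_j,\xi_j)$ independent across $j$ and $R_j\perp\xi_j$. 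The normalizing constant computation identifies $c_p=\frac1p\Gamma(1+\tfrac2p)$ via $\int_0^\infty t^{p+1}e^{-t^p}\,dt=\frac1p\Gamma(1+\tfrac2p)$.

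The third step is to assemble: plugging the decomposition into the section formula gives
$$\vol_{2(n-1)}(a^\perp\cap B_p^n(\C))=C_{n,p}\,\E_{\xi,R}\Big|\sum_{j=1}^n a_j R_j\xi_j\Big|^{-2},$$
and then one divides by $\vol_{2(n-1)}(B_p^{n-1}(\C))$; the bulk of the remaining work is bookkeeping of the constants $C_{n,p}$ and of the volume of the lower-dimensional ball, with almost everything cancelling except the single factor $\Gamma(1+\tfrac2p)$. One should check the normalization on a convenient test case — e.g. $n=1$, where $a^\perp\cap B_p^1(\C)=\{0\}$ is degenerate, so better $n=2$ with $a=a^{(2)}$, giving $A_{2,p}(a^{(2)})=2^{1-2/p}$ as quoted in Theorem~\ref{th1} — to pin down the constant and confirm the $\Gamma(1+\tfrac2p)$ prefactor rather than, say, its square or reciprocal.

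I expect the main obstacle to be the careful justification of the "completion" step that turns the $(2n{-}2)$-dimensional section integral into the $\R^4$-per-block radial representation, i.e. getting the right power ($|\cdot|^{-2}$) and the right radial density ($t^{p+1}e^{-t^p}$ with the sphere $S^3$) with all constants tracked honestly; this is the place where the complex/real and section/full-space dimension counts interact and where an off-by-a-factor error is easiest to make. The secondary nuisance is integrability: $\E_{\xi,R}\big|\sum a_j R_j\xi_j\big|^{-2}$ must be shown finite, which follows because the sum is absolutely continuous with a bounded density near the origin in $\R^4$ (as $\sum|a_j|^2=1$ and at least one block is genuinely two-dimensional), so the singularity $|x|^{-2}$ in $\R^4$ is integrable — worth stating explicitly, perhaps by noting the conditional density of $\sum a_j R_j\xi_j$ given the $R_j$ is bounded.
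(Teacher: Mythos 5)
Your overall skeleton (section volume as a negative-moment expectation, cone-measure decomposition of the uniform distribution on $B_p^n(\C)$ via the density $\propto e^{-\|z\|_p^p}$, then a radial/angular factorization) matches the paper's strategy, but you have two genuine gaps that are not bookkeeping and would stop the proof from going through.

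First, the "section volume equals a constant times $\E|Z|^{-2}$" identity is not valid as stated for a two-dimensional marginal $Z$: near the origin in $\R^2$, $\int |z|^{-2}f(z)\,dz$ diverges whenever the density $f$ is continuous and nonzero at $0$. The paper's Step 1 instead invokes Corollary 11 of Chasapis--Nayar--Tkocz, which gives
$$A_{n,p}(a)=\lim_{q\nearrow 2}\frac{2-q}{2\pi}\,\E\Bigl|\sum_{j=1}^n a_jY_j\Bigr|^{-q},$$
with $Y_j$ the i.i.d.\ complex variables of density $\propto e^{-\beta_p|z|^p}$. The vanishing prefactor $(2-q)$ is essential; you cannot set $q=2$ until the marginal has been lifted to a higher dimension. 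Your proposal never introduces this limit and therefore has no mechanism to make sense of the $-2$ power before the lift.

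Second, and more importantly, the "Gaussian completion that produces the extra two real dimensions per block and hence $S^3$" is not an argument — it is the key lemma, and you don't have it. In the paper, one first writes $Y_j\stackrel{d}{=}R'_jD_j$ with $D_j$ uniform on the unit disk $\D\subset\R^2$ (Step 3; note the radial density $c_p^{-1}t^{p+1}e^{-t^p}$ arises from an integration-by-parts identity for $\P(|Y_1|>s)$, not from any $\R^4$ dimension count as your heuristic "$p+1=4-1+(p-2)$" suggests). The passage from $\D$-valued $D_j$ to $S^3$-valued $\xi_j$ is then exactly Proposition 4 of K\"onig--Kwapie\'n:
$$\E\Bigl|\sum_{j=1}^n a_j\xi_j\Bigr|^{-q}=\frac{2-q}{2}\,\E\Bigl|\sum_{j=1}^n a_jD_j\Bigr|^{-q}\qquad(q<2),$$
which simultaneously (a) replaces the disk by $S^3$, using that the $2$-dimensional marginal of uniform measure on $S^3$ is uniform on $\D$, and (b) cancels the $(2-q)$ factor, making the limit $q\nearrow 2$ finite and equal to $\E|\sum a_jR_j\xi_j|^{-2}$. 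Without quoting (or proving) this identity, your sketch cannot reach the stated formula, and indeed you flag this as the part you are least sure of. So the structure you outline is correct, but the two inputs you are missing — the $\lim_{q\nearrow 2}(2-q)\E|\cdot|^{-q}$ representation from CNT and the K\"onig--Kwapie\'n $\D$-to-$S^3$ negative-moment identity — are precisely where the proposition lives; the remaining steps really are just normalization bookkeeping, as you say.
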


\begin{proof}
    We shall divide the proof into several steps, in each of them we shall obtain a formula closer to the one we are aiming for.

    \textit{Step 1.} Corollary 4.4 in Chasapis, Nayar, Tkocz \cite{CNT} with $k = 2$, $\| \cdot \| = | \cdot |$, $K = \D$, $H = a^{\perp}$, $A = cB_p^n(\C)$, where $c$ is such that $\vol_{2n}(A) = 1$, and $X$ uniformly distributed on $B_p^n(\C)$ gives us
    \begin{align*}
        \vol_{2(n-1)}\left( cB^n_p(\C) \cap a^{\perp} \right) &= \lim_{q \nearrow 2}\frac{2-q}{2\pi}\E\left| \langle a, cX \rangle_{\C} \right|^{-q},
    \end{align*}
    where $\langle \cdot, \cdot \rangle_{\C}$ denotes the complex scalar product. Thus
    \begin{align*}
        \frac{\vol_{2(n-1)}\left( B^n_p(\C) \cap a^{\perp} \right)}{\vol_{2n}\left( B^n_p(\C) \right)} &= c^{2n}\vol_{2(n-1)}\left( B^n_p(\C) \cap a^{\perp} \right) = c^2\vol_{2(n-1)}\left( cB^n_p(\C) \cap a^{\perp} \right) \\ &= c^2\lim_{q \nearrow 2}\frac{2-q}{2\pi}\E\left| \langle a, cX \rangle_{\C} \right|^{-q} \\ &= \lim_{q \nearrow 2}\frac{2-q}{2\pi}c^{2-q}\E\left| \langle a, X \rangle_{\C} \right|^{-q} \\ &= \lim_{q \nearrow 2}\frac{2-q}{2\pi}\E\left| \langle a, X \rangle_{\C} \right|^{-q}.
    \end{align*}

    \textit{Step 2.} Let $Y_1, \ldots, Y_n$ be i.i.d. complex-valued random variables with density $e^{-\beta_p|z|^p}$, where $\beta_p = \left( \pi\Gamma\left( 1 + \frac 2 p \right) \right)^{\frac p 2}$. Denote $Y = (Y_1, \ldots, Y_n)$ and $S = \|Y\|_p$. By applying Proposition 9.3.3 in Prochno, Th\"ale, Turchi \cite{PTT} with $K = B^n_p(\C)$ and $Z = Y$ treated as vector in $\R^{2n}$, we obtain that $U^{\frac{1}{2n}}\frac{Y}{S}$ is uniformly distributed on $B^n_p(\C)$ and $S$ is independent of $\frac{Y}{S}$, where $U$ is uniformly distributed on $[0, 1]$ and independent of $Y$. Take $D$ uniformly distributed on $\D$ and independent of all other vectors, then clearly $|D|^2 \sim U$ and we obtain that $|D|^{\frac{1}{n}}\frac{Y}{S}$ is uniformly distributed  on $B^n_p(\C)$. Then
        \begin{align*}
            \E\left| \langle a, X \rangle_{\C} \right|^{-q} &= \E\left| \left\langle a, |D|^{\frac{1}{n}}\frac{Y}{S} \right\rangle_{\C} \right|^{-q} = \E\left| \frac{|D|^{\frac{1}{n}}}{S}\sum_{j=1}^na_jY_j \right|^{-q} \\ &= \frac{\E S^{-q}}{\E S^{-q}}\E|D|^{-\frac{q}{n}}\E\left| \sum_{j=1}^na_j\frac{Y_j}{S} \right|^{-q} = \frac{\E|D|^{-\frac{q}{n}}}{\E S^{-q}}\E\left| \sum_{j=1}^na_jY_j \right|^{-q},
        \end{align*}
    where we used in the second equality that $Y$ and its complex conjugate $\left( \overline{Y_1}, \ldots, \overline{Y_n} \right)$ have the same distribution and in the last equality that $S$ and $\frac{Y}{S}$ are independent. Applying this to the result of Step 1 yields
    \begin{equation}\label{formula21}
        \frac{\vol_{2(n-1)}\left( B^n_p(\C) \cap a^{\perp} \right)}{\vol_{2n}\left( B^n_p(\C) \right)} = \lim_{q \nearrow 2}\frac{2-q}{2\pi}\frac{\E|D|^{-\frac{q}{n}}}{\E S^{-q}}\E\left| \sum_{j=1}^na_jY_j \right|^{-q}.
    \end{equation}
    Take $a = a^{(1)} = (1, 0, \ldots, 0)$, then the above equality assumes the form
    \begin{equation}\label{formula22}
        \frac{\vol_{2(n-1)}\left( B^{n-1}_p(\C) \right)}{\vol_{2n}\left( B^n_p(\C) \right)} = \lim_{q \nearrow 2}\frac{2-q}{2\pi}\E|Y_1|^{-q}\lim_{q \nearrow 2}\frac{\E|D|^{-\frac{q}{n}}}{\E S^{-q}}.
    \end{equation}
    It follows from Lemma 4.3 in \cite{CNT} with $f$ there being the density of $Y_1$ that
    \begin{align*}
        \lim_{q \nearrow 2}\frac{2-q}{2\pi}\E|Y_1|^{-q} = 1.
    \end{align*}
    Substituting it into \eqref{formula22} and dividing \eqref{formula21} by \eqref{formula22} proves that
    \begin{equation}\label{formula2}
        A_{n, p}(a) = \lim_{q \nearrow 2}\frac{2-q}{2\pi}\E\left| \sum_{j=1}^na_jY_j \right|^{-q}.
    \end{equation}

    \textit{Step 3.} Let $g(t) = e^{-\beta_pt^p}$ for $t \in \R$. Then $Y_1$ has density $g(|z|)$ and $R_1' := \beta_p^{-\frac 1 p}R_1$ has density $-\pi t^2g'(t)\mathbf{1}_{t > 0}$. Let also $D_1, \ldots, D_n$ be i.i.d random variables uniformly distributed on $\D$ and independent of $R_1, \ldots, R_n$. We shall prove that $Y_1$ has the same distribution as $R_1'D_1$. Both variables are rotationally invariant in the plane, hence it suffices to check that their magnitudes have the same distribution. For $s > 0$ we have
    \begin{align*}
        \P(|Y_1| > s) &= \int_{|z| > s}g(|z|)dz = 2\pi\int_s^{\infty}tg(t)dt = -2\pi\left( \frac{s^2}{2}g(s) + \int_s^{\infty}\frac{t^2}{2}g'(t)dt \right) \\ &= -\pi\int_s^{\infty}\left(t^2 - s^2 \right)g'(t)dt = -\pi\int_s^{\infty}\left( 1 - \frac{s^2}{t^2} \right)t^2g'(t)dt \\ &= \int_0^{\infty}\P\left( |D_1| > \frac{s}{t} \right)\left( -\pi t^2g'(t) \right)dt \\ &= \P\left( |D_1| > \frac{s}{R_1'} \right) = \P(|D_1|R_1' > s),
    \end{align*}
    where in the second equality we passed to the polar coordinates. The fact that $Y_j$ is equidistributed to $R'_jD_j$ lets us rewrite \eqref{formula2} as
    \begin{equation}\label{formula3}
        A_{n. p}(a) = \lim_{q \nearrow 2}\beta_p^{\frac q p}\frac{2-q}{2\pi}\E\left| \sum_{j=1}^na_jR_jD_j \right|^{-q}.
    \end{equation}

    \textit{Step 4.} Our final aim is to remove the limit from the formula \eqref{formula3}. By Proposition 4 of K\"onig and Kwapie\'n \cite{KK} we have
    \begin{align*}
        \E\left| \sum_{j=1}^na_j\xi_j\right|^{-q} = \frac{2-q}{2}\E\left| \sum_{j=1}^na_jD_j \right|^{-q}
    \end{align*}
    for $q < 2$. Conditioning on $R_j$, passing to the limit $q \nearrow 2$ using the last equality, and substituting the value of $\beta_p = \left(\pi\Gamma\left( 1 + \frac 2 p \right) \right)^{\frac p 2}$ yields the claim of Proposition \ref{prop1}.
\end{proof}

\textit{Remark.} For $p = \infty$ we have
\begin{align*}
    A_{n,\infty}(a) := \frac{\vol_{2(n-1)}\left( a^\perp \cap B_{\infty}^n(\C) \right)}{\vol_{2(n-1)}\left( B_{\infty}^{n-1}(\C) \right)} = \E_{\xi}\left| \sum_{j=1}^na_j\xi_j \right|^{-2},
\end{align*}
which was proved by Brzezinski \cite{Br} (Proposition 3.2). \\

One part of the proof of Theorem \ref{th1} is based on a second formula for $A_{n,p}(a)$ which is derived from Proposition \ref{prop1}. Here $J_0$ denotes the standard Bessel function of order $0$.

\begin{proposition}\label{prop2}
Let $1 \le p < \infty$, $n \in \N$ and $a = (a_j)_{j=1}^n \in S^{n-1} \subset \R^n$. Then
$$A_{n,p}(a) = \Gamma\left( 1+\frac 2 p \right) \frac 1 2 \int_0^\infty \prod_{j=1}^n \gamma_p(a_j s) \ s \ ds, $$
where
$$\gamma_p(s) := \frac 2 {\Gamma\left( 1+\frac 2 p \right)} \int_0^\infty J_0(sr) \exp(-r^p) \ r \ dr. $$
\end{proposition}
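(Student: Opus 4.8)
The plan is to derive this second formula from Proposition \ref{prop1} in two moves: first return from the sphere variables $\xi_j\in S^3\subset\R^4$ to the planar disc variables, so that everything lives in $\C\cong\R^2$; and then recognise the resulting quantity as the value at the origin of the density of a sum of independent planar radial random vectors, whose characteristic functions are built out of $J_0$ (the source of $J_0$ being that $t\mapsto J_0(r|t|)$ is precisely the characteristic function of the uniform law on the circle of radius $r$ in $\R^2$).

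Concretely, apply the K\"onig--Kwapie\'n identity (Proposition~4 of \cite{KK}, as used in Step~4 of the proof of Proposition~\ref{prop1}) conditionally on $(R_j)_{j=1}^n$ with coefficients $a_jR_j$, and let $q\nearrow 2$; since the second negative moment in Proposition~\ref{prop1} is finite, dominated convergence gives
$$A_{n,p}(a)=\Gamma\!\left(1+\tfrac2p\right)\E_{\xi,R}\Bigl|\sum_{j=1}^n a_jR_j\xi_j\Bigr|^{-2}=\Gamma\!\left(1+\tfrac2p\right)\lim_{q\nearrow 2}\frac{2-q}{2}\,\E\Bigl|\sum_{j=1}^n a_jR_jD_j\Bigr|^{-q},$$
where $D_1\etc D_n$ are i.i.d.\ uniform on $\D$ and independent of the $R_j$. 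Set $Z:=\sum_{j=1}^n a_jR_jD_j$, a random vector in $\C\cong\R^2$. A one-line consequence of Step~3 of the proof of Proposition~\ref{prop1} is that $|R_jD_j|$ has density $r\mapsto\frac{2}{\Gamma(1+2/p)}r\exp(-r^p)$ on $[0,\infty)$, so $R_1D_1$ has the bounded continuous density $z\mapsto\frac{1}{\pi\Gamma(1+2/p)}\exp(-|z|^p)$; hence $Z$ also has a bounded continuous density $f_Z$. Using $\int_{|z|\le 1}|z|^{-q}\,dz=\frac{2\pi}{2-q}$ for $q<2$ together with continuity of $f_Z$ at $0$, one gets $\lim_{q\nearrow 2}\frac{2-q}{2}\E|Z|^{-q}=\pi f_Z(0)$, so that $A_{n,p}(a)=\pi\,\Gamma(1+\tfrac2p)\,f_Z(0)$.

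It remains to evaluate $f_Z(0)$ by Fourier inversion. Each $a_jR_jD_j$ is rotationally invariant in the plane; conditioning on its modulus $|a_j|\,|R_jD_j|$ and using that the characteristic function of the uniform law on the circle of radius $r$ is $t\mapsto J_0(r|t|)$ shows, by the density of $|R_jD_j|$ recorded above, that $a_jR_jD_j$ has characteristic function $t\mapsto\gamma_p(a_j|t|)$ (with $\gamma_p$ even). Consequently $Z$ has characteristic function $t\mapsto\prod_{j=1}^n\gamma_p(a_j|t|)$, and Fourier inversion followed by passage to polar coordinates yields
$$f_Z(0)=\frac{1}{(2\pi)^2}\int_{\R^2}\prod_{j=1}^n\gamma_p(a_j|t|)\,dt=\frac{1}{2\pi}\int_0^\infty\prod_{j=1}^n\gamma_p(a_js)\,s\,ds.$$
Substituting this into $A_{n,p}(a)=\pi\Gamma(1+\tfrac2p)f_Z(0)$ gives the claimed identity.

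The step requiring the most care is the legitimacy of this Fourier inversion at the origin, i.e.\ the integrability of $t\mapsto\prod_j\gamma_p(a_j|t|)$ on $\R^2$. The density $\frac{1}{\pi\Gamma(1+2/p)}\exp(-|z|^p)$ of $R_1D_1$ lies in $L^1(\R^2)\cap L^\infty(\R^2)\subset L^2(\R^2)$, so Plancherel's theorem gives $\gamma_p(|\cdot|)\in L^2(\R^2)$, and hence $t\mapsto\gamma_p(a_j|t|)\in L^2(\R^2)$ for each $j$ with $a_j\ne0$; by the Cauchy--Schwarz inequality the product $\prod_{j=1}^n\gamma_p(a_j|t|)$ is then in $L^1(\R^2)$ as soon as $a$ has at least two nonzero coordinates — in particular for $a^{(n)}$ and $a^{(2)}$ with $n\ge 2$, which is the only case needed for Theorem~\ref{th1}. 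In the degenerate case $a=\pm e_j$ one has $A_{n,p}(a)=1$ and $\tfrac12\int_0^\infty\gamma_p(s)\,s\,ds=\tfrac{1}{\Gamma(1+2/p)}$ directly, the integral being read as an improper (conditionally convergent) integral, equivalently via an $\e$-regularisation of the inversion formula. Finally, the continuity of $f_Z$ at $0$ used above is immediate once one writes $f_Z$ as the convolution of the bounded continuous density of $a_kR_kD_k$, for some $k$ with $a_k\ne0$, with the law of $Z-a_kR_kD_k$.
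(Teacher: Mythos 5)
Your proposal is correct and proves the same identity, but it takes a genuinely different route from the paper's. The paper stays in $\R^4$ throughout: it writes $|x|^{-2}=\frac12\int_0^\infty j_1(|x|s)\,s\,ds$ with $j_1(s)=2J_1(s)/s$, interchanges $\E_{\xi,R}$ with the (conditionally convergent) $s$-integral --- citing K\"onig--Rudelson for the justification --- then uses that $j_1$ is the Fourier transform of the uniform law on $S^3$ to obtain $\E_\xi\, j_1\bigl(|\sum a_jR_j\xi_j|s\bigr)=\prod_j j_1(a_jR_js)$, and finally computes $\E_{R_j}j_1(R_js)=\gamma_p(s)$ via integration by parts using $\frac{d}{dx}\bigl(J_1(x)x\bigr)=J_0(x)x$. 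You instead descend to the plane: you undo the K\"onig--Kwapie\'n step of Proposition~\ref{prop1} to write $A_{n,p}(a)$ as a $q\nearrow2$ limit of negative $q$-moments of $Z=\sum a_jR_jD_j$, identify that limit with $\pi f_Z(0)$ (essentially the Lemma~10 mechanism from \cite{CNT} in $\R^2$), and then evaluate $f_Z(0)$ by two-dimensional Fourier inversion, with $J_0$ entering naturally as the characteristic function of a circle rather than via a Bessel integration-by-parts identity. What each approach buys: yours makes the appearance of $J_0$ and of the product structure $\prod_j\gamma_p(a_js)$ transparent from classical Fourier inversion, and replaces the interchange-of-conditionally-convergent-integral step by the cleaner $L^1$ integrability of the product of characteristic functions (Plancherel plus Cauchy--Schwarz, which you correctly observe needs at least two nonzero $a_j$); the paper's route avoids re-examining the $q\nearrow2$ limit and yields the formula uniformly in $a$ without splitting off the degenerate $a=\pm e_j$ case, at the cost of the more delicate Fubini justification inherited from \cite{KR}.

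Two small points worth double-checking in your write-up: your application of Fourier inversion uses the normalization $f_Z(0)=\frac{1}{(2\pi)^2}\int_{\R^2}\widehat{f_Z}$, consistent with the characteristic-function convention $\widehat{f_Z}(t)=\E e^{i\langle t,Z\rangle}$; and in the degenerate case one should note that the asserted identity $\frac12\int_0^\infty\gamma_p(s)\,s\,ds=\frac{1}{\Gamma(1+2/p)}$ is exactly the regularized inversion $\pi f_{R_1D_1}(0)=\frac1{\Gamma(1+2/p)}$, so that formula still matches $A_{n,p}(e_j)=1$ once the conditionally convergent integral is read in the Abel/Gaussian-regularized sense you indicate.
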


\begin{proof}
Let $j_1(s):= 2 \frac {J_1(s)} s$, where $J_1$ denotes the Bessel function of order $1$. Then $\lim_{s \searrow 0} j_1(s) = 1$ and for any $t>0$
\begin{equation}\label{eq2.1}
\int_0^\infty j_1(ts) \ s \ ds = \frac 2 {t^2} \int_0^\infty J_1(u) du = \frac 2 {t^2}
\end{equation}
since $J_0' = -J_1$. Thus with the notation from Proposition \ref{prop1}
\begin{align*}
\E_{\xi,R} \left| \sum_{j=1}^n a_j R_j \xi_j \right|^{-2} &= \frac 1 2 \E_{\xi,R} \int_0^\infty j_1\left( \left| \sum_{j=1}^n a_j R_j \xi_j \right| s \right) \ s \ ds \\
&=\frac 1 2 \int_0^\infty \E_{\xi,R} \ j_1\left( \left| \sum_{j=1}^n a_j R_j \xi_j \right| s \right) \ s \ ds.
\end{align*}
Interchanging $\E_{\xi,R}$ and  $\int_0^\infty$ has to be justified, since \eqref{eq2.1} is only a conditionally convergent Riemann integral.
The verification is similar as in the proof of Proposition 3.2 (b) in K\"onig, Rudelson \cite{KR}. The argument is as follows: Let $N>0$. For finite intervals $[0,N]$ clearly we may interchange $\int_0^N$ and $\E_{\xi,R}$. Therefore it suffices to show
\begin{equation}\label{eq2.1a}
\lim_{N \to \infty} \E_{\xi,R} \int_N^\infty j_1\left( \left| \sum_{j=1}^n a_j R_j \xi_j \right| s \right) \ s \ ds = 0
\end{equation}
and
\begin{equation}\label{eq2.1b}
\lim_{N \to \infty} \int_N^\infty \E_{\xi,R} \ j_1 \left( \left| \sum_{j=1}^n a_j R_j \xi_j \right| s \right) \ s \ ds = 0 \ .
\end{equation}
Using that $J_0' = -J_1$, we find, if $\left| \sum_{j=1}^n a_j R_j \xi_j \right| \ne 0$, that
\begin{align*}
\left| \int_N^\infty j_1 \left( \left| \sum_{j=1}^n a_j R_j \xi_j \right| s \right) \ s \ ds \right| & = 2 \frac{\left| \int_N^\infty J_1\left( \left| \sum_{j=1}^n a_j R_j \xi_j \right| s \right) \ s \ ds \right| }{\left| \sum_{j=1}^n a_j R_j \xi_j \right|} \\
& = 2 \frac{\left|J_0 \left(\left| \sum_{j=1}^n a_j R_j \xi_j \right| \ N \right)\right|}{\left| \sum_{j=1}^n a_j R_j \xi_j \right|^2} \le \frac 2 {\left| \sum_{j=1}^n a_j R_j \xi_j \right|^2} \ ,
\end{align*}
which is integrable with respect to $(\xi,R)$ by Proposition \ref{prop1}, independently of $N>0$. Further
$\lim_{N \to \infty} J_0 \left(\left| \sum_{j=1}^n a_j R_j \xi_j \right| \ N \right) = 0$, if $\left|\sum_{j=1}^n a_j R_j \xi_j\right| \ne 0$. Thus \eqref{eq2.1a} follows using Lebesgue's dominated convergence theorem. As for \eqref{eq2.1b}, we have
\begin{align*}
\left|- \frac d {ds} \frac{J_0 \left(\left| \sum_{j=1}^n a_j R_j \xi_j \right| \ s \right)}{\left|\sum_{j=1}^n a_j R_j \xi_j \right|^2} \right| & = \left|\frac{J_1 \left(\left| \sum_{j=1}^n a_j R_j \xi_j \right| \ s \right)}{\left|\sum_{j=1}^n a_j R_j \xi_j \right|} \right| \\
& =  \frac 1 2 \left|j_1 \left(\left| \sum_{j=1}^n a_j R_j \xi_j \right| \ s \right) \ s \right| \le \frac 1 {\left|\sum_{j=1}^n a_j R_j \xi_j \right|}
\end{align*}
with
$$\E_{\xi,R} \left|\frac d {ds} \frac{J_0 \left(\left| \sum_{j=1}^n a_j R_j \xi_j \right| \ s \right)}{\left|\sum_{j=1}^n a_j R_j \xi_j \right|^2} \right| \le \E_{\xi,R} \frac 1 {\left|\sum_{j=1}^n a_j R_j \xi_j \right|}  \le \left( \E_{\xi,R} \frac 1 {\left|\sum_{j=1}^n a_j R_j \xi_j \right|^2}  \right)^{\frac 1 2} $$
being bounded independently of $s>0$. Therefore
\begin{align*}
-2 \frac d {ds} \left( \E_{\xi,R} \ \frac{J_0 \left(\left| \sum_{j=1}^n a_j R_j \xi_j \right| \ s \right)}{\left|\sum_{j=1}^n a_j R_j \xi_j \right|^2} \right) & =
-2 \E_{\xi,R} \left( \frac d {ds} \frac{J_0 \left(\left| \sum_{j=1}^n a_j R_j \xi_j \right| \ s \right)}{\left|\sum_{j=1}^n a_j R_j \xi_j \right|^2} \right) \\
& = \E_{\xi,R} \ j_1 \left( \left| \sum_{j=1}^n a_j R_j \xi_j \right| s \right) \ s
\end{align*}
and hence
\begin{align*}
\left|\int_N^\infty \E_{\xi,R} \ j_1 \left( \left| \sum_{j=1}^n a_j R_j \xi_j \right| s \right) \ s \ ds \right| & = 2 \left|\E_{\xi,R} \left( \frac{J_0 \left(\left| \sum_{j=1}^n a_j R_j \xi_j \right| \ N \right)}{\left|\sum_{j=1}^n a_j R_j \xi_j \right|^2} \right) \right| \\
& \le 2 \E_{\xi,R} \left( \frac 1 {\left|\sum_{j=1}^n a_j R_j \xi_j \right|^2} \right)
\end{align*}
is bounded independent of $N$, implying \eqref{eq2.1b} by Lebesgue's dominated convergence theorem, since pointwise
$\lim_{N \to \infty} J_0 \left(\left| \sum_{j=1}^n a_j R_j \xi_j \right| \ N \right) = 0$ for $\left|\sum_{j=1}^n a_j R_j \xi_j\right| \ne 0$. \\

Let $e \in S^3$ be a fixed vector and $m$ denote the normalized Lebesgue surface measure on $S^3$. Then for any $t \in \R$, cf. \cite {KR} ,
\begin{equation}\label{eq2.2}
\int_{S^3} \exp\left( i t \langle e,u \rangle \right) \ dm(u) = j_1(t).
\end{equation}
This implies for $(b_j)_{j=1}^n \in \R^n$
$$\prod_{j=1}^n j_1(b_j s) = \int_{(S^3)^n} \exp\left( i s \left\langle e, \sum_{j=1}^n b_j u_j \right\rangle \right) \ \prod_{j=1}^n dm(u_j) \ , $$
which holds for every $e \in S^3$. Averaging over $e \in S^3$, we find from \eqref{eq2.2}
$$\int_{S^3} \exp\left( i s \left\langle e, \sum_{j=1}^n b_j u_j \right\rangle \right) dm(e) = j_1 \left( \left| \sum_{j=1}^n b_j u_j \right| s \right) $$
and hence
$$ \prod_{j=1}^n j_1(b_j s) = \int_{(S^3)^n} j_1\left( \left| \sum_{j=1}^n b_j u_j \right| s \right) \ \ \prod_{j=1}^n dm(u_j) = \E_\xi \ j_1\left( \left| \sum_{j=1}^n b_j \xi_j \right| s \right) \ . $$
Hence, using the independence of the $R_j$
\begin{align*}
\E_{\xi,R} \left| \sum_{j=1}^n a_j R_j \xi_j \right|^{-2} &= \frac 1 2 \int_0^\infty \E_R\left (\prod_{j=1}^n j_1(a_j R_j s) \right) \ s \ ds \\
&= \frac 1 2 \int_0^\infty \prod_{j=1}^n \E_{R_j}( j_1(a_j R_j s)) \ s \ ds.
\end{align*}
To calculate $\E_{R_1}(j_1(R_1 s))$, we use that $\frac d {dx} (J_1(x) x) = J_0(x) x$, cf. Watson \cite{W}, 3.13, so that
$\frac d {dr} \left( \frac {J_1(sr)} s r \right) = J_0(sr) r$, and hence integration by parts yields
\begin{align*}
\E_{R_1}(j_1(R_1 s)) &= c_p^{-1} \int_0^\infty j_1(sr) r^{p+1} \exp(-r^p) dr \\ &= 2 c_p^{-1} \int_0^\infty \frac{J_1(sr)} s r \ r^{p-1} \exp(-r^p) dr \\
&= \frac 2 p c_p^{-1} \int_0^\infty J_0(sr) r \exp(-r^p) dr = \gamma_p(s).
\end{align*}
with $\frac 2 p c_p^{-1} = \frac 2 {\Gamma\left( 1+\frac 2 p \right)}$. This and Proposition \ref{prop1} implies
$$A_{n,p}(a) = \Gamma\left( 1+\frac 2 p \right) \frac 1 2 \int_0^\infty \prod_{j=1}^n \gamma_p(a_js) \ s \ ds \ . $$
\end{proof}

{\it Remark.} Proposition \ref{prop2} for $1 \le p \le 2$ can also be derived from Theorem 2 of Koldobsky, Zymonopoulou \cite {KZ} which has the form
$$\vol_{2(n-1)} \left( a^\perp \cap B_p^n(\C) \right) = c_{p,n} \int_0^\infty \prod_{j=1}^n f(|a_j| s) \ s \ ds \ , \ a = (a_j)_{j=1}^n \in \C^n, \ \sum_{j=1}^n |a_j|^2 = 1 \ , $$
where $f(s) = \int_{\R^2} \exp(-(u^2+v^2)^{\frac p 2}) \exp(-i u s) \ du \ dv$ and $c_{p,n} = \frac 1 {2 \pi} \frac 1 {\Gamma \left(1+\frac{2n-2} p \right)}$. Polar integration with $u = r \cos(\phi), v = r \sin(\phi)$ yields, using a standard formula for the Bessel function $J_0$,
\begin{align*}
f(s) & = \int_0^\infty \exp(-r^p) \ r \ \left(\int_0^{2 \pi} \cos(s r \cos(\phi)) \ d \phi \right) \ dr \\
& = 2 \pi \int_0^\infty J_0(s r) \ \exp(-r^p) \ r \ dr = \Gamma \left(1+\frac 2 p \right) \pi \gamma_p(s) \ .
\end{align*}
Then with $d_{p,n} = c_{p,n} \left(\Gamma \left(1+\frac 2 p \right) \pi \right)^n$,
$$\vol_{2(n-1)} \left( a^\perp \cap B_p^n(\C) \right) = d_{p,n} \int_0^\infty \prod_{j=1}^n \gamma_p(|a_j| s) \ s \ ds \ , $$
which yields for $a = a^{(1)}$ that $\vol_{2(n-1)} \left( B_p^{n-1}(\C) \right) = d_{p,n} \int_0^\infty \gamma_p(s) \ s \ ds$. For $n=2$, this gives
$\int_0^\infty \gamma_p(s) \ s \ ds = \frac{\vol_2( B_p^1(\C) )}{d_{p,2}} = \frac \pi {d_{p,2}} = \frac 2 {\Gamma \left(1+\frac 2 p \right)}$. Normalization yields
$$A_{n,p}(a) = \frac {\int_0^\infty \prod_{j=1}^n \gamma_p(|a_j| s) \ s \ ds} {\int_0^\infty \gamma_p(s) \ s \ ds} = \frac 1 2 \Gamma \left(1+\frac 2 p \right) \int_0^\infty \prod_{j=1}^n \gamma_p(|a_j| s) \ s \ ds \ .$$

\begin{corollary}\label{cor1}
$$A_{n,p}\left( a^{(2)} \right) = 2^{1-\frac 2 p}. $$
\end{corollary}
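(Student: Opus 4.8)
The plan is to evaluate the formula of Proposition \ref{prop2} at $a = a^{(2)} = \frac 1 {\sqrt 2}(1,1,0 \etc 0)$. Since $J_0(0) = 1$ and $\int_0^\infty \exp(-r^p)\, r\, dr = \frac 1 p \Gamma\left(\frac 2 p\right) = \frac 1 2 \Gamma\left(1 + \frac 2 p\right)$, one gets $\gamma_p(0) = 1$, so the $n-2$ vanishing coordinates of $a^{(2)}$ contribute factors equal to $1$, and after the substitution $s = \sqrt 2\, t$ one obtains
$$A_{n,p}\left(a^{(2)}\right) = \Gamma\left(1 + \frac 2 p\right) \frac 1 2 \int_0^\infty \gamma_p\left(\frac s {\sqrt 2}\right)^2 s\, ds = \Gamma\left(1 + \frac 2 p\right) \int_0^\infty \gamma_p(t)^2\, t\, dt.$$
It therefore remains to compute $\int_0^\infty \gamma_p(t)^2\, t\, dt$.

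For this I would identify $\gamma_p$ as a Fourier transform. By the polar-coordinate formula for the Fourier transform of a radial function on $\R^2$, $\gamma_p(|\xi|)$ equals $\widehat{h_p}(\xi)$, where $h_p(x) := \frac 1 {\pi \Gamma(1 + 2/p)}\exp(-|x|^p)$; note that $\int_{\R^2} h_p = 1$ by the same computation that gave $\gamma_p(0) = 1$. Since $h_p \in L^1(\R^2) \cap L^2(\R^2)$ and $\gamma_p$ is real-valued (being the transform of a real radial function), Plancherel's theorem gives
$$2\pi \int_0^\infty \gamma_p(t)^2\, t\, dt = \int_{\R^2} \left| \widehat{h_p}(\xi) \right|^2 d\xi = (2\pi)^2 \int_{\R^2} h_p(x)^2\, dx = \frac{8\pi}{\Gamma(1 + 2/p)^2} \int_0^\infty \exp(-2 r^p)\, r\, dr.$$
The remaining one-dimensional integral is elementary: the substitution $v = 2 r^p$ together with $\Gamma\left(\frac 2 p\right) = \frac p 2 \Gamma\left(1 + \frac 2 p\right)$ yields $\int_0^\infty \exp(-2 r^p)\, r\, dr = 2^{-1 - 2/p}\, \Gamma\left(1 + \frac 2 p\right)$. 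Combining the displays gives $\int_0^\infty \gamma_p(t)^2\, t\, dt = 2^{1 - 2/p} / \Gamma\left(1 + \frac 2 p\right)$, hence $A_{n,p}\left(a^{(2)}\right) = 2^{1 - \frac 2 p}$.

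I do not expect a genuine obstacle here; the only point requiring a word of justification is the legitimacy of the Plancherel step, which follows from $\exp(-|x|^p) \in L^2(\R^2)$. Equivalently — and perhaps most cleanly — one can stay entirely in one variable and invoke the Parseval identity for the Hankel transform of order zero, $\int_0^\infty \tilde g(t)^2\, t\, dt = \int_0^\infty g(r)^2\, r\, dr$ with $\tilde g(t) = \int_0^\infty g(r) J_0(tr)\, r\, dr$, applied to $g(r) = \exp(-r^p)$, after which everything reduces to the Gamma-function computation above. One could alternatively derive the corollary from Proposition \ref{prop1} by evaluating $\E\left| R_1 \xi_1 + R_2 \xi_2 \right|^{-2}$ for rotationally invariant vectors in $\R^4$, but the route through Proposition \ref{prop2} is shorter.
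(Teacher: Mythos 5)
Your proof is correct and follows essentially the same route as the paper: the paper invokes the Hankel-transform isometry on $L_2((0,\infty);\,r\,dr)$ directly, whereas you derive the same identity via the two-dimensional Plancherel theorem for the radial function $\exp(-|x|^p)$, which is the same fact (order-zero Hankel $=$ radial $2$D Fourier) and you even note the Hankel formulation yourself as the cleanest variant.
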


\begin{proof}
We note that $\gamma_p$ is -up to a constant- the Hankel transform of $\HH(f)$ of $f$, $f(r) = \exp(-r^p)$, $\HH(f)(s) = \int_0^\infty J_0(sr) f(r) \ r \ dr$. The Hankel transform is an isometry $\HH : L_2((0,\infty); r dr) \to L_2((0,\infty); r dr)$ with $\HH^2 = \Id$, cf. Poularikas \cite {P}, chapter 9. Therefore
\begin{align*}
A_{n,p}\left( a^{(2)} \right) &= \Gamma\left( 1+\frac 2 p \right) \frac 1 2 \int_0^\infty \gamma_p\left( \frac s {\sqrt 2} \right)^2 \ s \ ds \\
&= \Gamma\left( 1+\frac 2 p \right) \int_0^\infty \gamma_p(r)^2 \ r \ dr \\
&= \Gamma\left( 1+\frac 2 p \right) \left( \frac 2 {\Gamma\left( 1+\frac 2 p \right)} \right)^2  \int_0^\infty \exp(-2 r^p) \ r \ dr = 2^{1-\frac 2 p},
\end{align*}
since $\int_0^\infty \exp(-2 r^p) \ r \ dr = 2^{-\frac 2 p -1} \Gamma\left( 1+\frac 2 p \right)$.
\end{proof}

To prove Theorem \ref{th1}, we need some facts on the $\Gamma$-function.

\begin{lemma}\label{lem1}
(a) Let $f(p):= \frac{\Gamma\left( 1+\frac 4 p \right)}{\Gamma\left( 1+\frac 2 p \right)}$. Then $f(p) \ge \frac{24}{25}$ for all $p \ge 4$. \\
(b) Let $g(p):= \frac{\Gamma\left( 1+\frac 1 p \right)}{\Gamma\left( 1+\frac 2 p \right)}$. Then $g(p)$ is decreasing for all $p \ge 7$, with $g(7)< 1.0397$ and $g(9) < 1.0377$. \\
(c) Let $h(p):= \frac{\left( 2^{\frac 1 p} \Gamma\left( 1+\frac 2 p \right) \right)^2}{\Gamma\left( 1+\frac 4 p \right)}$. Then $h(p) > 1$ for all $2 < p < \infty$ and for all $p \ge 9$
$$h(p) \ge 1 + \frac{2 \ln 2} p -\frac{\frac 2 3 \pi^2 - 2 (\ln 2)^2} {p^2} + \frac 4 {p^3} > 1. $$
\end{lemma}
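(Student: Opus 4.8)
The plan is to substitute $x=2/p$ in parts (a) and (c) and $x=1/p$ in part (b), reducing each statement to a one-variable inequality, and then to use the Taylor expansions about $1$
$$\ln\Gamma(1+u)=-\gamma u+\sum_{k\ge 2}\frac{(-1)^k\zeta(k)}{k}u^k,\qquad \psi(1+u)=-\gamma+\sum_{k\ge 2}(-1)^k\zeta(k)\,u^{k-1}\qquad(|u|<1),$$
together with the monotonicity of the polygamma functions ($\psi'>0$ decreasing, etc.). For (a), with $x=2/p\in(0,\tfrac12]$ and $f(p)=\Gamma(1+2x)/\Gamma(1+x)$, subtracting the two $\ln\Gamma$-series gives $\ln f(x)=-\gamma x+\sum_{k\ge 2}\tfrac{(-1)^k\zeta(k)(2^k-1)}{k}x^k$, an alternating series (for $k\ge 2$) whose terms decrease in absolute value on $(0,\tfrac12]$; hence $\ln f(x)\ge-\gamma x+\tfrac{\pi^2}{4}x^2-\tfrac{7\zeta(3)}{3}x^3$, where one may keep the $x^4,x^5$ terms for extra room. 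Minimizing this polynomial over $[0,\tfrac12]$ --- its stationary points solve a quadratic --- and exponentiating yields $f(p)\ge\tfrac{24}{25}$; the margin over $\ln\tfrac{24}{25}=-0.0408\ldots$ is small but strictly positive.

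For (b), $\tfrac{d}{dp}\ln g(p)=\tfrac1{p^2}\bigl(2\psi(1+\tfrac2p)-\psi(1+\tfrac1p)\bigr)$, so it suffices to show $\varphi(x):=2\psi(1+2x)-\psi(1+x)<0$ on $(0,\tfrac17]$. Now $\varphi(0)=\psi(1)=-\gamma<0$ and $\varphi'(x)=4\psi'(1+2x)-\psi'(1+x)>0$ there (since $\psi'(1+x)<4\psi'(1+2x)$ for $x\in[0,\tfrac17]$, as one sees from the series together with $\psi'$ decreasing), so $\varphi$ is increasing on $[0,\tfrac17]$ and it only remains to verify the single value $\varphi(\tfrac17)<0$, which follows from the $\zeta$-series for $\psi$. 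The endpoint estimates $g(7),g(9)$ are obtained by evaluating $\ln\Gamma(1+\tfrac1p)-\ln\Gamma(1+\tfrac2p)$ for $p=7,9$ from the $\ln\Gamma$-series carried to enough terms.

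For (c), Legendre's duplication formula $\Gamma(1+2x)=\tfrac{2^{2x}}{\sqrt\pi}\Gamma(x+\tfrac12)\Gamma(x+1)$ with $x=2/p$ turns $h$ into $h(p)=\dfrac{\sqrt\pi\,\Gamma(1+2/p)}{2^{2/p}\,\Gamma(\tfrac12+2/p)}$, so $\ln h(p)=-r(\tfrac2p)$ with $r(x):=\ln\Gamma(x+\tfrac12)-\ln\Gamma(x+1)+x\ln2-\ln\sqrt\pi$. One checks $r(0)=r(1)=0$ and $r''(x)=\psi'(x+\tfrac12)-\psi'(x+1)>0$, so $r$ is strictly convex on $[0,\infty)$; hence $r<0$ on $(0,1)$, i.e. $h(p)>1$ for all $2<p<\infty$. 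For the quantitative bound when $p\ge 9$, write $\ln h(p)=\tfrac{2\ln2}p+2\ln\Gamma(1+\tfrac2p)-\ln\Gamma(1+\tfrac4p)$ and expand via the $\ln\Gamma$-series:
$$\ln h(p)=\frac{2\ln2}p+\sum_{k\ge 2}\frac{(-1)^k\zeta(k)(2-2^k)2^k}{k\,p^k}=\frac{2\ln2}p-\frac{2\pi^2}{3p^2}+\frac{16\zeta(3)}{p^3}-\frac{56\zeta(4)}{p^4}+\cdots,$$
an alternating series in $k$ whose terms decrease in absolute value for $p\ge 9$, so $\ln h(p)\ge\ell(p):=\tfrac{2\ln2}p-\tfrac{2\pi^2}{3p^2}+\tfrac{16\zeta(3)}{p^3}-\tfrac{56\zeta(4)}{p^4}>0$. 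Since $e^t\ge 1+t+\tfrac{t^2}2$ for $t\ge 0$, $h(p)\ge 1+\ell(p)+\tfrac12\ell(p)^2$; expanding in powers of $1/p$, the $1/p$ and $1/p^2$ coefficients reproduce $2\ln2$ and $-(\tfrac23\pi^2-2(\ln2)^2)$ exactly, while the $1/p^3$ coefficient is $16\zeta(3)-\tfrac43\pi^2\ln2\approx 10$ (and the true $1/p^3$ coefficient of $h$ is $16\zeta(3)-\tfrac43\pi^2\ln2+\tfrac43(\ln2)^3\approx 10.6$), which leaves ample room for the claimed $\tfrac4{p^3}$ once the remaining negative $1/p^4,1/p^5,\dots$ contributions are absorbed using $p\ge 9$; that the stated lower bound exceeds $1$ is the elementary estimate $2\ln2-\tfrac1p\bigl(\tfrac23\pi^2-2(\ln2)^2\bigr)\ge 2\ln2-\tfrac19\bigl(\tfrac23\pi^2-2(\ln2)^2\bigr)>0$ for $p\ge 9$.

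The difficulty everywhere is quantitative sharpness, not structure. In (c) the genuine $1/p^3$-coefficient ($\approx 10.6$) is of the same order of magnitude as the $1/p^4$-coefficient ($\approx-56\zeta(4)\approx-60$) divided by $p$, so for $p$ as small as $9$ one has to bound the $k\ge 4$ tail of the $\zeta$-series and the $t^3/6$-remainder of $e^t$ carefully to keep $\tfrac4{p^3}$; likewise the final numerical checks in (a) (where $\min\ln f$ only narrowly beats $\ln\tfrac{24}{25}$) and the inequality $\varphi(\tfrac17)<0$ and the values $g(7),g(9)$ in (b) require keeping several terms of the relevant expansions, hence a little care.
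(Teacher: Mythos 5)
Your proposal is correct in substance, and part of it is genuinely different from (and cleaner than) the paper's argument, so a comparison is warranted.

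For (b) you do essentially what the paper does: differentiate, reduce to the sign of $2\psi(1+2/p)-\psi(1+1/p)$, use monotonicity of $\psi'$ to show this expression is monotone, and check one value. (The paper gets monotonicity for all $p\ge 2$ from the crude bounds $\frac{\pi^2}{6}-1\le\psi'(1+x)\le\frac{\pi^2}{6}$ on $[0,1]$; your restriction to $[0,1/7]$ is harmless.)

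For (a) you take a different route. The paper differentiates $f$, locates a unique interior minimum $p_1\simeq 13.78$ of $f$ via the Digamma function, and evaluates $f(p_1)$. You instead expand $\ln f$ via the $\ln\Gamma$-series in $x=2/p\in(0,\tfrac12]$ and minimize a truncated polynomial. Your method is sound, but with only the $x^3$-truncation the margin is dangerously thin: the minimum of $-\gamma x+\tfrac{\pi^2}{4}x^2-\tfrac{7\zeta(3)}{3}x^3$ on $[0,\tfrac12]$ is about $-0.04068$, while $\ln(24/25)\approx -0.04082$ --- a gap of roughly $1.4\cdot 10^{-4}$. You rightly flag that the $x^4,x^5$ terms should be retained for a safe argument; with them the conclusion is comfortable, but as written the computation is not carried out, so this step is a promissory note rather than a proof.

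For (c), your proof of $h(p)>1$ on all of $(2,\infty)$ via Legendre's duplication formula --- reducing to $r(x)=\ln\Gamma(x+\tfrac12)-\ln\Gamma(x+1)+x\ln 2-\ln\sqrt\pi$, noting $r(0)=r(1)=0$ and $r''=\psi'(x+\tfrac12)-\psi'(x+1)>0$ --- is genuinely different from the paper's two-case argument (series for $p\gtrsim 4$ and a separate monotonicity argument on $(2,4]$), and it is strictly nicer: one convexity observation replaces both cases. The quantitative bound for $p\ge 9$ is then the same alternating-series/exponential-lower-bound argument as in the paper, just packaged so that the $2^{2/p}$ factor is absorbed into $\ln h$ from the start. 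One small point you should make explicit: before invoking $e^t\ge 1+t+\tfrac{t^2}{2}$ you need $t=\ell(p)\ge 0$; this does hold for $p\ge 9$ (e.g. $2\ln 2-\tfrac{2\pi^2}{3p}>0$ and $16\zeta(3)-\tfrac{56\zeta(4)}{p}>0$ there), but it is a hypothesis of the inequality and should be stated. With that and the bookkeeping for the $1/p^4,\dots,1/p^8$ coefficients of $1+\ell+\tfrac12\ell^2$ actually carried out, the bound $h(p)\ge 1+\tfrac{2\ln2}{p}-\tfrac{\tfrac23\pi^2-2(\ln2)^2}{p^2}+\tfrac{4}{p^3}$ follows with room to spare for $p\ge 9$.
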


\begin{proof}
(a) Let $\Psi:= (\ln \Gamma)'$ denote the Digamma-function. Then $\Psi' >0$, since $\Gamma$ is logarithmic convex. We have
$$f'(p) = \frac{2 f(p)}{p^2} \left( \Psi\left( 1+\frac 2 p \right) - 2 \Psi\left( 1+\frac 4 p \right) \right). $$
The derivative of $F(p) := \Psi\left( 1+\frac 2 p \right) - 2 \Psi\left( 1+\frac 4 p \right)$ is
$$F'(p) = \frac 2 {p^2} \left( 4 \Psi'\left( 1+\frac 4 p \right) - \Psi'\left( 1+\frac 2 p \right) \right).$$
By Artin \cite{A} or Abramowitz, Stegun \cite{AS}, 6.3.16 and 6.4.10, we have for all $x > 0
$ that
\begin{equation}\label{eq2.3}
\Psi(1+x) = -\gamma + \sum_{n=1}^\infty \frac x {n(n+x)} \; , \; \Psi'(1+x) = \sum_{n=1}^\infty \frac 1 {(n+x)^2},
\end{equation}
where $\gamma \simeq 0.5772$ denotes the Euler constant. Therefore $\Psi'(1) = \zeta(2) = \frac{\pi^2} 6$, $\Psi'(1+x)$ is decreasing in $x$ and we have for $0 \le x \le 1$ that $\frac{\pi^2} 6 - 1 = \Psi'(2) \le \Psi'(1+x) \le \Psi'(1) = \frac{\pi^2} 6$. Here and later $\zeta$ denotes the Riemann $\zeta$-function,
$\zeta(\alpha) = \sum_{n=1}^\infty \frac 1 {n^\alpha}$ for $\alpha>1$. Hence for $p \ge 4$, $F'(p) \ge \frac 2 {p^2} \left( \frac{\pi^2} 2 - 4 \right) >0$ and $F$ is increasing. Further $F(13) \simeq -0.028$, $F(14) \simeq 0.071$: $F$ has exactly one zero $p_1 \in [4,\infty)$, $p_1 \simeq 13.78$. Thus $f$ is decreasing in $[4,p_1)$ and increasing in $(p_1,\infty)$. Hence for $p \ge 4$, $f(p) \ge f(p_1) > 0.9618 > \frac{24}{25}$. \\

(b) For $g$ we have
$$g'(p) = \frac{g(p)}{p^2} \left( 2 \Psi\left( 1+\frac 2 p \right) - \Psi\left( 1+\frac 1 p \right) \right). $$
Then $G(p):= 2 \Psi\left( 1+\frac 2 p \right) - \Psi\left( 1+\frac 1 p \right)$ satisfies
$$G'(p) = \frac 1 {p^2} \left( \Psi'\left( 1+\frac 1 p \right) - 4 \Psi'\left( 1+\frac 2 p \right) \right).$$
For $x \in [0,1]$ we have as in (a) $\frac{\pi^2} 6 -1 \le \Psi'(1+x) \le \frac {\pi^2} 6$. We find for all $p \ge 2$ that $G'(p) \le -\frac 1 {p^2} \left( \frac{\pi^2} 2 - 4 \right) < 0$, so that $G$ is decreasing. Since $G(7) < -0.007$, we have $G(p) < 0$ for all $p \ge 7$. Therefore $g$ is decreasing for $p \ge 7$, with $g(7) < 1.0390$ and $g(9) < 1.0377$. \\

(c) Let $H(p):= \frac{\Gamma\left( 1+\frac 2 p \right)^2}{\Gamma\left( 1+\frac 4 p \right)}$. We claim that for all $p \ge 9$
$$H(p) > 1 -\frac 2 3 \frac{\pi^2} p + \frac{15}{p^2}. $$
We have $H'(p) = H(p) \frac 4 {p^2} \left( \Psi\left( 1+\frac 4 p \right) - \Psi\left( 1+\frac 2 p \right) \right)$. Using equation \eqref{eq2.3} and the geometric series, we find for all $p>4$
\begin{align*}
(\ln H)'(p) = \frac{H'(p)}{H(p)} &= \frac 4 {p^2} \sum_{n=1}^\infty \left( \frac{\frac 4 p}{n\left(n+\frac 4 p\right)} -  \frac{\frac 2 p}{n\left(n+\frac 2 p\right)} \right) \\
&= \frac 4 {p^2} \sum_{n=1}^\infty \left( \sum_{k=0}^\infty \frac{(-1)^k}{n^{k+2}} \frac{4^{k+1}-2^{k+1}}{p^{k+1}} \right) \\
&= \frac 4 {p^2} \sum_{k=0}^\infty (-1)^k \zeta(k+2) \frac{4^{k+1}-2^{k+1}}{p^{k+1}}.
\end{align*}
This is an alternating series with decreasing coefficients $(\zeta(k+2) \frac{4^{k+1}-2^{k+1}}{p^{k+1}})_{k=0}^\infty$, using that $p>4$. Integration yields
$$(\ln H)(p) = C + 4 \sum_{k=0}^\infty (-1)^{k+1} \frac{\zeta(k+2)}{k+2} \frac{4^{k+1}-2^{k+1}}{p^{k+2}} = C - \frac 2 3 \frac{\pi^2}{p^2} + 16 \frac{\zeta(3)}{p^3} ... $$
Since $\Gamma(1+x) = 1 - \gamma x + O(x^2)$, $H(p) = 1 \pm O\left( \frac 1 {p^2} \right)$ and $\lim_{p \to \infty} (\ln H)(p) = 0$, so that the constant is zero, $C=0$. Since the series for $(\ln H)$ is alternating with decreasing coefficients for $p > 4$, we get a lower bound by truncating the series after three terms. Using $\zeta(2) = \frac{\pi^2} 6$, $\zeta(4) = \frac{\pi^4}{90}$ we find that
$$(\ln H)(p) \ge - \frac 2 3 \frac{\pi^2}{p^2} + \frac{16 \zeta(3)}{p^3}- \frac{28}{45} \frac{\pi^4}{p^4} =: \phi(p). $$
This implies by the series expansion of the exponential for $p \ge 9$ that
\begin{align*}
H(p) &= \exp((\ln H)(p)) \ge 1 + \phi(p) + \frac 1 2 \phi(p)^2 \\
&\ge 1 - \frac 2 3 \frac{\pi^2}{p^2} + \frac{16 \zeta(3)}{p^3} - \frac{\frac 2 5 \pi^4}{p^4} - \frac{32 \pi^2 \zeta(3)} {3 p^5} > 1 - \frac 2 3 \frac{\pi^2}{p^2} + \frac{13.3}{p^3} \ .
\end{align*}
The last inequality holds since $16 \zeta(3) -\frac{\frac 2 5 \pi^4} 9 - \frac {32 \pi^2 \zeta(3)} {3 \cdot 81} > 13.3$.
Further, $2^{\frac 2 p} = \exp\left( \frac{2 \ln 2} p \right) \ge 1 + \frac{2 \ln 2} p + \frac{2 (\ln 2)^2}{p^2}$, so that
\begin{align*}
h(p) = 2^{\frac 2 p} H(p) & > \left( 1 + \frac{2 \ln 2} p + \frac{2 (\ln 2)^2}{p^2} \right) \left( 1 - \frac 2 3 \frac{\pi^2}{p^2} + \frac{13.3}{p^3} \right) \\
& > 1 + \frac{2 \ln 2} p - \frac{\frac 2 3 \pi^2 - 2 (\ln 2)^2}{p^2} + \frac {4} {p^3} \ ,
\end{align*}
where the last inequality is true since the product expansion yields positive coefficients of $p^{-4}$, $p^{-5}$ and $p^{-6}$ and $13.3 - \frac 4 3 \pi^2 \ln(2) > 4$ holds for the coefficient of $p^{-3}$. As easily seen, the last expression is $>1$ for $p >9$ (even for $p>4$). \\

We have $h(2)=1$. To prove $h(p) > 1$ also for $2 < p < 9$, it suffices to show $(\ln h)(p) > 0$, i.e. $(\ln h)(p) = \frac {2 \ln 2} p + (\ln H)(p) > 0$, which is satisfied for $p > 4$ if
$$\frac {2 \ln 2} p - \frac{\frac 2 3 \pi^2}{p^2} + \frac{16 \zeta(3)}{p^3}- \frac{28}{45} \frac{\pi^4}{p^4} > 0. $$
This holds for all $p \ge 4.01$. Taking two more terms in the expansion for $(\ln H)(p)$ yields that $p \ge 3.82$ suffices. \\
For $2 < p < 4$ we check the sign of the derivative \\ $h'(p) = h(p) \left( -\frac{2 \ln 2}{p^2} - \frac 4 {p^2} \Psi\left( 1+\frac 2 p \right) + \frac 4 {p^2} \Psi\left( 1+\frac 4 p \right) \right)$. This is positive if and only if $\Psi\left( 1+\frac 4 p \right) - \Psi\left( 1+\frac 2 p \right) > \frac 1 2 \ln 2 \simeq 0.3466$.  Let
$K(p):= \Psi \left( 1+\frac 4 p \right) - \Psi \left( 1+\frac 2 p \right)$. Then $K'(p) = \frac 2 {p^2} \left( \Psi'\left(1+\frac 2 p \right) - 2 \Psi' \left(1+\frac 4 p \right) \right)$. By \eqref{eq2.3} $\Psi'(1+x)$ is decreasing in $x$. Hence for $2 \le p \le 4$, $\Psi'\left(1+\frac 2 p \right) - 2 \Psi' \left(1+\frac 4 p \right) \le \Psi'(2) - 2 \Psi'(3) = \frac 3 2 - \frac{\pi^2} 6  < - \frac 1 7 < 0$, since $\Psi'(2) = \frac{\pi^2} 6 -1$ and $\Psi'(3) = \frac{\pi^2} 6 - \frac 5 4$, see Abramowitz, Stegun \cite{AS}, 6.4.3. Hence $K$ is decreasing in $2 \le p \le 4$ and $K(p) \ge K(4) = \Psi(2) - \Psi \left(\frac 3 2 \right) = 2 \ln 2 -1 \simeq 0.3863 > \frac 1 2 \ln 2$ for all $p \in [2,4]$.
Therefore $h$ is strictly increasing in $[2,4]$ so that $h(p) > h(2) = 1$ for $2 < p \le 4$.
\end{proof}

To prove Theorem \ref{th2} we need some Lipschitz property of $A_{n,p}(a)$ with respect to $p$, similar to Lemma 14 in Eskenazis, Nayar, Tkocz \cite{ENT}.

\begin{proposition}\label{p-lipschitz}
    For $p > 8$ and every unit vector $a \in \C^n$ we have
    \begin{align*}
        |A_{n, p}(a) - A_{n, \infty}(a)| < \frac{16}{p}.
    \end{align*}
\end{proposition}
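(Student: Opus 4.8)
\emph{Proof plan.}
Since $B_p^n(\C)$ is invariant under coordinatewise unimodular scalings and under permutations of the coordinates, $A_{n,p}(a)$ and $A_{n,\infty}(a)$ depend only on $(|a_1| \etc |a_n|)$, so I may assume $a\in S^{n-1}\subset\R^n$ with $a_j\ge 0$; the case $n=1$ gives $A_{1,p}(a)=1=A_{1,\infty}(a)$ directly, so assume $n\ge 2$. By Proposition~\ref{prop2}, together with Brzezinski's formula from the remark after Proposition~\ref{prop1} (its $p\to\infty$ instance, $\gamma_\infty=j_1$), I have the common representation
\[
A_{n,p}(a)=\Gamma\!\left(1+\tfrac2p\right)\tfrac12\int_0^\infty\prod_{j=1}^n\gamma_p(a_js)\,s\,ds,\qquad A_{n,\infty}(a)=\tfrac12\int_0^\infty\prod_{j=1}^n j_1(a_js)\,s\,ds .
\]
I will use: $|j_1|,|\gamma_p|\le 1$; $j_1$ is $\tfrac12$-Lipschitz (from $j_1(t)=\int_{S^3}\cos(t\langle e,u\rangle)\,dm(u)$ and $\int_{S^3}|\langle e,u\rangle|\,dm\le(\int_{S^3}\langle e,u\rangle^2\,dm)^{1/2}=\tfrac12$); $|j_1(t)|\le 2\sqrt{2/\pi}\,t^{-3/2}$; the representation $\gamma_p(t)=\E_{R_1}j_1(R_1t)$ established inside the proof of Proposition~\ref{prop2}, where the change of variables $T=R^p$ shows $R_1=T_1^{1/p}$ with $T_1$ having the $\Gamma(1+\tfrac2p)$-density $\Gamma(1+\tfrac2p)^{-1}t^{2/p}e^{-t}$; and $1-\Gamma(1+\tfrac2p)\le\tfrac{2\gamma}{p}$. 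Finally, $A_{n,\infty}(a)\le 2$ is the polydisc slicing theorem of Oleszkiewicz and Pelczy\'nski normalised as in Corollary~\ref{cor1}, and conditioning on $R$ in the formula of Proposition~\ref{prop1} gives $A_{n,p}(a)=\Gamma(1+\tfrac2p)\E_R[\,|aR|^{-2}A_{n,\infty}(\widehat{aR})\,]$ with $|aR|^2=\sum_j a_j^2R_j^2$ and $\widehat{aR}=(a_jR_j/|aR|)_j$; by convexity of $x\mapsto1/x$, $|aR|^{-2}\le\sum_j a_j^2R_j^{-2}$, hence $A_{n,p}(a)\le 2\Gamma(1+\tfrac2p)\E R_1^{-2}=2$.

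\emph{Coupling and the basic estimate.}
Let $X=\sum_{j}a_j\xi_j$ and $X_R=\sum_j a_jR_j\xi_j$, with $\xi_j$ i.i.d.\ uniform on $S^3$ and $R_j=T_j^{1/p}$ ($T_j$ i.i.d.\ $\Gamma(1+\tfrac2p)$), independent. The identity $\prod_j j_1(c_js)=\E_\xi j_1(|\sum_j c_j\xi_j|s)$ from the proof of Proposition~\ref{prop2}, combined with $\gamma_p(t)=\E_{R_1}j_1(R_1t)$, yields
\[
\prod_{j=1}^n\gamma_p(a_js)-\prod_{j=1}^n j_1(a_js)=\E_{R,\xi}\big[j_1(|X_R|s)-j_1(|X|s)\big].
\]
Crucially, $X_R-X=\sum_j a_j(R_j-1)\xi_j$ is a sum of independent, mean-zero $\R^4$-valued random vectors, so using $\sum_j a_j^2=1$,
\[
\E_{R,\xi}|X_R-X|\le\big(\E_{R,\xi}|X_R-X|^2\big)^{1/2}=\Big(\sum_j a_j^2\,\E(R_j-1)^2\Big)^{1/2}=\sigma,\qquad \sigma:=\big(\E(R_1-1)^2\big)^{1/2},
\]
and since $\E(R_1-1)^2=\Gamma(1+\tfrac2p)^{-1}\big(\Gamma(1+\tfrac4p)-2\Gamma(1+\tfrac3p)+\Gamma(1+\tfrac2p)\big)$, with the second difference $\le p^{-2}\sup_{0\le y\le1/2}\Gamma''(1+y)$ (an explicit $O(p^{-2})$), one gets $\sigma\le c_0/p$ with an explicit $c_0$ (for $p\ge 8$, $c_0<\tfrac32$) \emph{uniformly in $n$ and $a$} --- this dimension-free bound, coming from $\sum_j a_j^2=1$, is the point of the argument. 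By the Lipschitz bound, $|j_1(|X_R|s)-j_1(|X|s)|\le\tfrac{s}{2}\big||X_R|-|X|\big|\le\tfrac{s}{2}|X_R-X|$.

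\emph{Splitting the integral.}
Fix a threshold $s_0>0$ and split $\tfrac12\int_0^\infty[\Gamma(1+\tfrac2p)\prod_j\gamma_p(a_js)-\prod_j j_1(a_js)]\,s\,ds$ at $s_0$. On $[0,s_0]$ the integrand is pointwise at most $(1-\Gamma(1+\tfrac2p))|\prod_j\gamma_p(a_js)|+|\prod_j\gamma_p(a_js)-\prod_j j_1(a_js)|\le(1-\Gamma(1+\tfrac2p))+\E_{R,\xi}|j_1(|X_R|s)-j_1(|X|s)|\le(1-\Gamma(1+\tfrac2p))+\tfrac{s\sigma}{2}$, so the $[0,s_0]$-part is at most $\tfrac{(1-\Gamma(1+\frac2p))s_0^2}{4}+\tfrac{\sigma s_0^3}{12}=O(s_0^3/p)$. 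On $(s_0,\infty)$ the cleanest route uses the closed form $\int_0^x j_1(u)u\,du=2(1-J_0(x))$ (from $J_0'=-J_1$), which gives $\tfrac12\int_{s_0}^\infty\prod_j j_1(a_js)\,s\,ds=\E_\xi[|X|^{-2}J_0(|X|s_0)]$ and $\tfrac{\Gamma(1+\frac2p)}{2}\int_{s_0}^\infty\prod_j\gamma_p(a_js)\,s\,ds=\Gamma(1+\tfrac2p)\E_{R,\xi}[|X_R|^{-2}J_0(|X_R|s_0)]$ (the interchange of $\E$ with $\int_{s_0}^\infty$ being justified as in the proof of Proposition~\ref{prop2}); the difference of these tails is estimated using once more that $\E|X_R-X|$ is small, together with a uniform near-origin bound of the form ``density of $\sum_j a_j\xi_j$ at $x$ is $\le c/|x|$'' (which also underlies the finiteness of $A_{n,\infty}(a)=\E|X|^{-2}\le2$). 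Optimising $s_0$ and tracking the explicit constants yields $|A_{n,p}(a)-A_{n,\infty}(a)|<16/p$ for all $p>8$.

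\emph{The main obstacle.}
The delicate part is the tail $s>s_0$: when $a$ has essentially a single dominant coordinate the integrals $\int_0^\infty\prod_j\gamma_p(a_js)\,s\,ds$ and $\int_0^\infty\prod_j j_1(a_js)\,s\,ds$ converge only conditionally (each $j_1(a_1s)$ decays merely like $s^{-3/2}$, so $\int_0^\infty|j_1(a_1s)|\,s\,ds=\infty$), so passing to absolute values is not allowed and the oscillatory cancellation must be retained --- the closed form $2(1-J_0(x))$ is one device, and equivalently one may estimate $\E_\xi[|X_R|^{-2}-|X|^{-2}]$ directly and invoke a Lipschitz-type property of $A_{n,\infty}$ near the degenerate configurations (those $a$ for which $\sum_j a_j\xi_j$ can vanish), which holds with a universal constant but whose dimension-free verification carries the bulk of the work. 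I expect the real crux to be producing the full $1/p$ rate in this tail (rather than $p^{-\alpha}$ with $\alpha<1$), which forces one to keep the factor $\E|X_R-X|=O(1/p)$ present throughout rather than only the $O(1/s_0)$ coming from the size of the singular set; an alternative organisation is to bound $|\partial_q A_{n,q}(a)|=O(q^{-2})$ via a quantile coupling of the variables $T_j$ and integrate from $p$ to $\infty$.
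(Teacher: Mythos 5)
Your proposal captures the central mechanism correctly: condition on $R$, compare $X=\sum_j a_j\xi_j$ with $X_R=\sum_j a_jR_j\xi_j$, and exploit that $\E|X_R-X|\le\big(\sum_ja_j^2\E(R_j-1)^2\big)^{1/2}=\big(\E(R_1-1)^2\big)^{1/2}=O(1/p)$ \emph{uniformly} in $n$ and $a$ because $\sum_j a_j^2=1$. You also correctly identify the key conditioning identity $A_{n,p}(a)=\Gamma(1+\tfrac2p)\,\E_R\big[|aR|^{-2}A_{n,\infty}(\widehat{aR})\big]$, which is precisely the paper's $A_{n,p}(a)/\Gamma(1+\tfrac2p)=\E_R N(aR)^{-2}$. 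This is exactly the strategy the paper uses.

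However, there is a genuine gap that you yourself flag in the ``main obstacle'' paragraph: to convert $\E|X_R-X|=O(1/p)$ into $|\E N(aR)^{-2}-N(a)^{-2}|=O(1/p)$ you need a uniform, dimension-free Lipschitz-type control on the section-volume functional near the degenerate directions, and you do not supply it; you acknowledge that ``its dimension-free verification carries the bulk of the work.'' The Bessel-integral detour (splitting at $s_0$, using $\int_0^x j_1(u)u\,du=2(1-J_0(x))$, a $c/|x|$ density bound) never actually delivers the $1/p$ rate, and you explicitly say you expect this to be ``the real crux''. The missing ingredient is the theorem of Koldobsky, Paouris and Zymonopoulou (\cite{KPZ}) that
\[
N(z)\ :=\ \frac{|z|}{\big(\vol_{2(n-1)}(\tilde{B}_\infty^n(\C)\cap z^\perp)\big)^{1/2}}
\]
is a \emph{norm} on $\C^n$. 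Once one knows $N$ is a norm, the reverse triangle inequality $|N(w)-N(z)|\le N(w-z)$ together with the trivial bounds $1\le\vol(\cdot)\le 2$ (polydisc slicing) gives the elementary algebraic estimate
\[
\big|N(z)^{-2}-N(w)^{-2}\big|\ \le\ 4\,|w-z|\,\frac{|w|+|z|}{|w|^2|z|^2},
\]
and from there the paper needs only Cauchy--Schwarz, Jensen for $x\mapsto x^{-1}$ and $x\mapsto x^{-2}$, and the second-difference bound $\E(R_1-1)^2\le 2/(p^2\Gamma(1+\tfrac2p))$ to finish in a few lines with the explicit constant $16$. In short: your plan has the right skeleton, but without the \cite{KPZ} norm property (or an equivalent uniform Lipschitz estimate for $A_{n,\infty}$) the argument does not close; that fact is not a technicality one can ``track through'' --- it is the theorem that makes the whole approach work.
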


\begin{proof}
    Let $z$, $w$ be non-zero vectors in $\C^n$. It is proved in Koldobsky, Paouris, Zymonopoulou \cite{KPZ} that the function $N(z) := \frac{|z|}{\left( \vol_{2(n-1)}\left( \tilde{B}_\infty^n(\C) \cap z^{\perp} \right) \right)^{\frac 1 2}}$, where $\tilde{B}_\infty^n(\C) = \frac{1}{\pi}\D^n$ is a normalized polydisc, defines a norm on $\C^n$. Using this in the first inequality and writing for simplicity $\vol(x) := \vol_{2(n-1)}\left( \tilde{B}_\infty^n(\C) \cap x^{\perp} \right)$ we have
    \begin{align*}
        \left| N(z)^{-2} - N(w)^{-2} \right| &= \frac{\left| N(w)^2 - N(z)^2 \right|}{N(z)^2N(w)^2} = \frac{N(w) + N(z)}{N(z)^2N(w)^2}|N(w) - N(z)| \\ &\leq \frac{N(w) + N(z)}{N(z)^2N(w)^2}N(w - z) \\ &= \frac{\left( \frac{|w|}{\vol(w)^{1/2}} + \frac{|z|}{\vol(z)^{1/2}} \right)\vol(w)\vol(z)}{|w|^2|z|^2}\cdot\frac{|w - z|}{\vol(w - z)^{1/2}} \\ &\leq 4|w - z|\frac{|w| + |z|}{|w|^2|z|^2}
    \end{align*}
    since due to the Theorem of \cite{OP} we have $1 \leq \vol(\cdot) \leq 2$. Note that $N(a)^{-2} = A_{n, \infty}(a)$. By Proposition \ref{prop1} (and following the notation thereof) we also have
    \begin{align*}
        \frac{A_{n, p}(a)}{\Gamma\left( 1 + \frac 2 p \right)} = \E_R\E_{\xi}\left|\sum_{j=1}^na_jR_j\xi_j\right|^{-2} = \E_RN(aR)^{-2},
    \end{align*}
    where $aR = (a_1R_1, \ldots, a_nR_n)$ and the latter equality follows by the Remark following Proposition \ref{prop1}. Hence we have
    \begin{align}\label{i1i2}
        \left| \frac{A_{n, p}(a)}{\Gamma\left( 1 + \frac 2 p \right)} - A_{n, \infty}(a) \right| &= \left| \E N(aR)^{-2} - N(a)^{-2} \right| \leq 4\E\left[ |a - aR|\frac{|a| + |aR|}{|a|^2|aR|^2} \right] \notag \\ &= 4\E|a - aR||aR|^{-1} + 4\E|a - aR||aR|^{-2} = I_1 + I_2.
        \end{align}
    Using Cauchy-Schwarz inequality we obtain
    \begin{equation}\label{i1}
        I_1 \leq 4\sqrt{\E|a-aR|^2}\sqrt{\E|aR|^{-2}} = 4\sqrt{\E\sum_{j=1}^na_j^2(R_j-1)^2}\sqrt{\E\left( \sum_{j=1}^na_j^2R_j^2 \right)^{-1}}
    \end{equation}
    and
    \begin{equation}\label{i2}
        I_2 \leq 4\sqrt{\E\sum_{j=1}^na_j^2(R_j-1)^2}\sqrt{\E\left( \sum_{j=1}^na_j^2R_j^2 \right)^{-2}}.
    \end{equation}
    By convexity of $x \mapsto \frac{1}{x}$ and $x \mapsto \frac{1}{x^2}$ for $x > 0$ and Jensen's inequality (recall that $\sum_{j=1}^na_j^2 = 1$) we have
    \begin{align}\label{i12}
        \E\left(\sum_{j=1}^na_j^2R_j^2\right)^{-1} \leq \E\sum_{j=1}^na_j^2R_j^{-2} = \frac{1}{\Gamma\left( 1 + \frac 2 p \right)}
    \end{align}
    and
    \begin{align}\label{i22}
        \E\left(\sum_{j=1}^na_j^2R_j^2\right)^{-2} \leq \E\sum_{j=1}^na_j^2R_j^{-4} = \frac{\Gamma\left( 1 - \frac 2 p \right)}{\Gamma\left( 1 + \frac 2 p \right)}.
    \end{align}
    To bound $\E\sum_{j=1}^na_j^2(R_j - 1)^2 = \E(R_1 - 1)^2 = \frac{\Gamma\left( 1 + \frac 4 p \right) - 2\Gamma\left( 1 + \frac 3 p \right) + \Gamma\left( 1 + \frac 2 p \right)}{\Gamma\left( 1 + \frac 2 p \right)}$, we consider the function $h(x) := \Gamma(1+4x) - 2\Gamma(1+3x) + \Gamma(1+2x)$. We have $h(0) = 0$ and $h'(0) = 0$, hence for small $x > 0$ there exists $0 < \theta < x$ such that $h(x) = \frac{1}{2}x^2h''(\theta)$. As $\Gamma''(1) < 2$ and $\Gamma''$ is decreasing on $(1, 3/2)$, by computing $h''(\theta)$ we obtain $h(x) \leq 2x^2$ for $x < 1/8$. Hence
    \begin{align}\label{i11i21}
        \E\sum_{j=1}^na_j^2(R_j - 1)^2 = \frac{h\left( \frac 1 p \right)}{\Gamma\left( 1 + \frac 2 p \right)} \leq \frac{2}{p^2\Gamma\left( 1 + \frac 2 p \right)}
    \end{align}
    for all $p > 8$. Putting \eqref{i12}, \eqref{i11i21} into \eqref{i1}, \eqref{i22}, \eqref{i11i21} into \eqref{i2} and \eqref{i1}, \eqref{i2} into \eqref{i1i2} we get
    \begin{align*}
        \left| A_{n, p}(a) - A_{n,\infty}(a) \right| &\leq \left| A_{n, p}(a) - \Gamma\left( 1 + \frac 2 p \right)A_{n,\infty}(a) \right| + A_{n,\infty}(a)\left| \Gamma\left( 1 + \frac 2 p \right) - 1 \right| \\ &\leq 4\Gamma\left( 1 + \frac 2 p \right)\sqrt{\frac{2}{p^2\Gamma\left( 1 + \frac 2 p \right)}} \cdot \frac{1 + \sqrt{\Gamma\left( 1 - \frac 2 p \right)}}{\sqrt{\Gamma\left( 1 + \frac 2 p \right)}} \\ &+ 2\left( 1 - \Gamma\left( 1 + \frac 2 p \right) \right) \leq \frac{1}{p}\left(4\sqrt{2}\left(1 + \sqrt[4]{\pi}\right) + 4\gamma\right) < \frac{16}{p},
    \end{align*}
    where in the second last inequality we used $\Gamma(1+x) > 1 - \gamma x$ for $x > 0$, which follows by $\Gamma(1) = 1$, $\Gamma'(1) = -\gamma$ and convexity of $\Gamma$, and $\Gamma(1 - x) < \Gamma\left( \frac 1 2 \right) = \sqrt{\pi}$ for $0 < x < \frac 1 2$.
\end{proof}

\section{Proof of Theorem \ref{th1}}

   We start with proof that $\lim_{n \to \infty}A_{n, p}\left(a^{(n)}\right) = 2\frac{\Gamma\left(1 + \frac 2 p\right)^2}{\Gamma\left(1 + \frac 4 p\right)}$ and that it is greater than $A_{n, p}\left(a^{(2)}\right)$. Define $X_n = \frac{1}{\sqrt{n}}\sum_{j=1}^nR_j\xi_j$. Then, according to the central limit theorem, $X_n$ converges in distribution to $\sigma G$, where $G$ is a standard Gaussian vector in $\R^4$ with mean 0 and the identity covariance matrix, and $\sigma^2 = \frac{1}{4}\E R_1^2 = \frac{1}{4}\frac{\Gamma\left( 1 + \frac 4 p \right)}{\Gamma\left( 1 + \frac 2 p \right)}$. Note that for $p \ge 4$ we have $\sigma^2 \ge \frac 1 4 \cdot \frac{24}{25} = \frac{6}{25}$ by Lemma \ref{lem1} (a) and for $p \in (2, 4)$ we have $\sigma^2 \ge \frac 1 4$ since then $\Gamma\left(1 + \frac 4 p\right) > 1 > \Gamma\left(1 + \frac 2 p\right)$ due to $1 + \frac 4 p > 2 > 1 + \frac 2 p$. Our aim is to show the convergence of the second negative moments. Using the fact that $|G|^2$ has density $\frac{x}{4}e^{-\frac{x}{2}}\mathbf{1}_{x>0}$ we get
    \begin{align*}
        \E|\sigma G|^{-2} = \frac{1}{\sigma^2}\E|G|^{-2} = \frac{1}{\sigma^2}\int_0^{\infty}\frac{1}{x}\frac{x}{4}e^{-\frac x 2}dx = \frac{1}{2\sigma^2} = 2\frac{\Gamma\left( 1 + \frac 2 p \right)}{\Gamma\left( 1 + \frac 4 p \right)} =: C_p.
    \end{align*}
    To verify the convergence of the second negative moments, denote \\
    $X_n^N := \frac{1}{\sqrt{N-n}}\sum_{j=n+1}^NR_j\xi_j$ for $n < N$ and $\delta_N := \left|\E|\sigma G|^{-2} - \E|X_N|^{-2}\right|$. We shall prove by induction on $N$ that $\delta_N \le \frac{C_0}{\sqrt{N}}$ with some universal constant $C_0$ to be chosen later. The vectors $X_n$ and $X_n^N$ are independent and we have
    \begin{align*}
        X_N = \frac{\sqrt{n}}{\sqrt{N}}X_n + \frac{\sqrt{N-n}}{\sqrt{N}}X_n^N.
    \end{align*}
    Let $G_1$, $G_2$ be independent standard Gaussian vectors such that
    \begin{align*}
        \sigma G = \frac{\sqrt{n}}{\sqrt{N}}\sigma G_1 + \frac{\sqrt{N-n}}{\sqrt{N}}\sigma G_2.
    \end{align*}
    For $t > 0$ we define
    \begin{align*}
        f_1(t) &:= \P\left( \left| \frac{\sqrt{n}}{\sqrt{N}}X_n \right|^{-2} > t \right), &f_2(t) := \P\left( \left| \frac{\sqrt{N-n}}{\sqrt{N}}X_n^N \right|^{-2} > t \right), \\ g_1(t) &:= \P\left( \left| \frac{\sqrt{n}}{\sqrt{N}}\sigma G_1 \right|^{-2} > t \right), &g_2(t) := \P\left( \left| \frac{\sqrt{N-n}}{\sqrt{N}}\sigma G_2 \right|^{-2} > t \right).
    \end{align*}
    Using Lemma 2 in Glover, Tkocz, Wyczesany \cite{GTW} and writing the expectation in terms of cumulative distribution function as usual we get
    \begin{align}\label{cutinhalf}
        \delta_N &= \left| \int_0^{\infty}g_1(t)g_2(t)dt - \int_0^{\infty}f_1(t)f_2(t)dt \right| \notag \\ &\leq \int_0^{\infty}g_1(t)|g_2(t) - f_2(t)|dt + \int_0^{\infty}f_2(t)|g_1(t) - f_1(t)|dt.
    \end{align}
    We will bound these integrals using the fact that integrals of $f_i$, $g_i$ are bounded and then apply a Berry-Esseen type bound to $|f_i - g_i|$ pointwise. To optimize this method we choose $n = \left\lfloor \frac{N}{2} \right\rfloor$ (here we assume $N > 1$). Using Theorem 1.1 in Rai\v c \cite{R} with summands $\frac{R_i\xi_i}{\sigma\sqrt{n}}$, $i = 1, \ldots, n$ and $A = \left\{x \in \R^4 : |x| < \frac{\sqrt{N}}{\sigma\sqrt{nt}}\right\}$ and denoting $C = 42\sqrt{2} + 16$ we have
    \begin{align}\label{g1f1}
        |g_1(t) - f_1(t)| &= \left| \P\left( |G_1| < \frac{\sqrt{N}}{\sigma\sqrt{nt}} \right) - \P\left( \left| \frac{X_n}{\sigma} \right| < \frac{\sqrt{N}}{\sigma\sqrt{nt}} \right) \right| \notag \\ &\leq C\sum_{j=1}^n\E\left| \frac{R_j\xi_j}{\sigma\sqrt{n}} \right|^3 = \frac{C}{\sigma^3\sqrt{n}}\E R_1^3 \le \frac{C\sqrt{3}}{\sigma^3\sqrt{N}}\E R_1^3 =: \frac{C_1}{\sqrt{N}},
    \end{align}
    and similarly
    \begin{equation}\label{g2f2}
        |g_2(t) - f_2(t)| \le \frac{C_1}{\sqrt{N}}.
    \end{equation}
    We know that $\E|G|^{-2} = \frac{1}{2}$. Moreover, by the triangle inequality and induction hypothesis we have $\E|X_m|^{-2} \le \E|\sigma G|^{-2} + \delta_m \le C_p + \frac{C_0}{\sqrt{m}}$ for $m \in \N$, $m < N$. Hence
    \begin{equation}\label{g1}
        \int_0^{\infty}g_1(t)dt = \E\left| \frac{\sqrt{n}}{\sqrt{N}}\sigma G_1 \right|^{-2} = \frac{N}{2n\sigma^2} \leq \frac{25N}{12n} \le \frac{75}{12}
    \end{equation}
    and
    \begin{equation}\label{f2}
        \int_0^{\infty}f_2(t)dt = \E\left| \frac{\sqrt{N-n}}{\sqrt{N}}X_n^N \right|^{-2} \le 2\left(C_p + \frac{C_0}{\sqrt{N-n}}\right).
    \end{equation}
    Combining \eqref{cutinhalf}, \eqref{g1f1}, \eqref{g2f2}, \eqref{g1} and \eqref{f2} we get
    \begin{align*}
        \delta_N \le \frac{C_1}{\sqrt{N}}\left(\frac{75}{12} + 2C_p + \frac{2C_0}{\sqrt{N-n}}\right) \le \frac{C_0}{\sqrt{N}},
    \end{align*}
    provided that $C_1\left(\frac{75}{12} + 2C_p + \frac{2C_0}{\sqrt{N-n}}\right) \le C_0$. The latter is true for sufficiently large $C_0$ if only $\frac{C_1}{\sqrt{N-n}} < \frac 1 2$. Take $N_0$ such that $\frac{C_1}{\sqrt{N-n}} < 0.49$ for $N > N_0$ and choose $C_0 = \max\left\{50C_1\left(\frac{75}{12} + 2C_p\right), \delta_1, \frac{\delta_2}{\sqrt{2}}, \ldots, \frac{\delta_{N_0}}{\sqrt{N_0}}\right\}$, then the induction with trivial basis $N \le N_0$ and the inductive step being the preceding proof for $N > N_0$ proves that $\delta_N \le \frac{C_0}{\sqrt{N}}$. Hence $\delta_N \to 0$ with $N \to \infty$, which proves the convergence of the second negative moments. Therefore
    \begin{align*}
        \lim_{n \to \infty}A_{n, p}\left( a^{(n)} \right) = \Gamma\left(1 + \frac 2 p\right) C_p > A_{n, p}\left( a^{(2)} \right),
    \end{align*}
    where the inequality follows from Corollary \ref{cor1} and Lemma \ref{lem1} (c).\\

\vspace{0,5cm}

We already proved the inequality $\lim_{n \to \infty}A_{n, p}\left( a^{(n)} \right) > A_{n, p}\left( a^{(2)} \right)$ and the existence of $N(p)$. What remains to be proved are the estimates for $N(p)$. By Proposition \ref{prop2}
$$A_{n,p}\left( a^{(n)} \right) = \Gamma\left( 1+\frac 2 p \right) \frac 1 2 \int_0^\infty \left( \gamma_p\left( \frac s {\sqrt n} \right) \right)^n \ s \ ds    \ , $$
where
\begin{equation}\label{eq2.4}
\gamma_p\left( \frac s {\sqrt n} \right) = \frac 2 {\Gamma\left( 1+\frac 2 p \right)} \int_0^\infty J_0\left( \frac{sr}{\sqrt n} \right) \exp(-r^p) \ r \ dr.
\end{equation}
To find finite values $n$ for which $A_{n, p}\left( a^{(n)} \right) > A_{n, p}\left( a^{(2)} \right)$ holds, we estimate this from  below. We use that for $0 \le x \le 2$, $J_0(x) >0$ and $J_0(x) \ge 1 - \frac{x^2} 4 + \frac{x^4}{72}$, since by the series representation of $J_0$ with $\frac 1 {64} = \frac 1 {72} + \frac 1 {576}$
$$J_0(x) - \left( 1 - \frac{x^2} 4 + \frac{x^4}{72} \right) = \frac{x^4}{576} - \frac{x^6}{2304} + \sum_{m=4}^\infty \frac{(-1)^m}{m!^2} \left( \frac x 2 \right)^{2m} > 0$$
for $0 < x \le 2$. We note that the first zero of $J_0$ is at $x_1 \simeq 2.4048 > 2$. This implies that
$$I_1 := \int_0^{\frac {2 \sqrt n} s} J_0\left( \frac{sr}{\sqrt n} \right) \exp(-r^p) \ r \ dr \ge \int_0^{\frac {2 \sqrt n} s} \left( r-\frac{s^2}{4n} r^3+\frac{s^4}{72 n^2} r^5 \right) \exp(-r^p) dr. $$
Writing $\int_0^{\frac {2 \sqrt n} s} = \int_0^\infty - \int_{\frac {2 \sqrt n} s}^\infty$, the integral over $(0,\infty)$ can be evaluated in terms of Gamma-functions and the remainder will be estimated. We find that
\begin{align*}
    I_1 \ge \frac 1 2 \Gamma\left( 1+\frac 2 p \right) - \frac{s^2}{4n} \frac 1 4 \Gamma\left( 1+\frac 4 p \right) + \frac{s^4}{72 n^2} \frac 1 6 \Gamma\left( 1+\frac 6 p \right) - Q_1,
\end{align*}
where for $p \ge 6$ and $s \le 2 \sqrt n$
\begin{align*}
Q_1 &:= \int_{\frac {2 \sqrt n} s}^\infty \left( r-\frac{s^2}{4n} r^3+\frac{s^4}{72 n^2} r^5 \right) \exp(-r^5) dr \\
& = \frac 1 p \int_{\left( \frac {2 \sqrt n} s \right)^p}^\infty \left( u^{\frac 2 p -1} - \frac{s^2}{4n} u^{\frac 4 p -1} + \frac{s^4}{72 n^2} u^{\frac 6 p -1} \right) \exp(-u) du \\
& \le \frac 1 p \int_{\left( \frac {2 \sqrt n} s \right)^p}^\infty \left( 1+\frac{s^4}{72 n^2} \right) \exp(-u) du = \frac 1 p \left( 1+\frac{s^4}{72 n^2} \right) \exp\left( -\left( \frac {2 \sqrt n} s \right)^p \right).
\end{align*}
By Gradshteyn, Ryszik \cite[8.479]{GR}, we have that $|J_0(x)| \le \sqrt{\frac 2 \pi} \frac 1 {\sqrt x} < \frac 4 5 \frac 1 {\sqrt x}$ for any $x>0$. This implies that
\begin{align*}
Q_2 & := \int_{\frac {2 \sqrt n} s}^\infty J_0\left( \frac{sr}{\sqrt n} \right) \exp(-r^p) \ r \ dr \le |Q_2|
\le \frac 4 5 \int_{\frac {2 \sqrt n} s}^\infty \frac{n^{\frac 1 4}}{\sqrt s} r^{\frac 1 2} \exp(-r^p) dr \\
& = \frac 4 5 \frac 1 p \frac{n^{\frac 1 4}}{\sqrt s} \int_{\left( \frac {2 \sqrt n} s \right)^p}^\infty u^{\frac 3 {2p} - 1} \exp(-u) du \le \frac 4 5 \frac 1 p \frac{n^{\frac 1 4}}{\sqrt s} \exp\left( -\left( \frac {2 \sqrt n} s \right)^p \right).
\end{align*}
For $x \ge 1$, $\exp(-x) \le \frac 1 { e x }$. Thus for $p \ge 6$ and $s \le 2 \sqrt n$,
\begin{align*}
Q_1+|Q_2| & \le \frac 1 p \left( 1+\frac{s^4}{72 n^2} + \frac 4 5 \frac{n^{\frac 1 4}}{\sqrt s} \right) \exp\left( -\left( \frac {2 \sqrt n} s \right)^p \right) \\
& \le \frac 1 {e p} \left( 1+\frac{s^4}{72 n^2} + \frac 4 5 \frac{n^{\frac 1 4}}{\sqrt s} \right) \left( \frac s {2 \sqrt n} \right)^p.
\end{align*}
We now further restrict ourselves to $s \le \sqrt{2 n}$ and claim that the right side then is $< \frac{s^4}{500 n^2}$ for all $p \ge 8$. This requires
$$\frac{ s^{p-4} \left( 1+\frac{s^4}{72 n^2} + \frac 4 5 \frac{n^{\frac 1 4}}{\sqrt s} \right) } {\left(\sqrt{2 n}\right)^{p-4}} < \frac {e p \left(\sqrt 2\right)^p}{125}. $$
The left side is increasing in $s$ and maximal for $s= \sqrt{ 2 n }$ and then bounded by $\frac 7 4$. We thus want $ \frac 7 4 125 < e p \left(\sqrt 2\right)^p$ which is satisfied for all $p \ge 8$. Hence for $p \ge 8$ and $s \le \sqrt{2 n}$
\begin{align*}
\gamma_p\left( \frac s {\sqrt n} \right) & = \frac 2 {\Gamma\left( 1+\frac 2 p \right)} \left( \int_0^{\frac{2 \sqrt n}s} + \int_{\frac{2 \sqrt n}s}^\infty \right) \ J_0\left( \frac{sr}{\sqrt n} \right) \exp(-r^p) \ r \ dr \\
& \ge 1 - \frac 1 8 \frac{\Gamma\left( 1+\frac 4 p \right)}{\Gamma\left( 1+\frac 2 p \right)} \frac{s^2} n + \frac 1 {216} \frac{\Gamma\left( 1+\frac 6 p \right)}{\Gamma\left( 1+\frac 2 p \right)} \frac{s^4}{n^2} - \frac 2 {\Gamma\left( 1+\frac 2 p \right)} (Q_1 - Q_2) \\
& \ge 1 - \frac 1 8 \frac{\Gamma\left( 1+\frac 4 p \right)}{\Gamma\left( 1+\frac 2 p \right)} \frac{s^2} n + \frac 1 {216} \frac{\Gamma\left( 1+\frac 6 p \right)}{\Gamma\left( 1+\frac 2 p \right)} \frac{s^4}{n^2} - \frac 2 {\Gamma\left( 1+\frac 2 p \right)} (Q_1 + |Q_2|) \\
& \ge 1 - \frac 1 8 \frac{\Gamma\left( 1+\frac 4 p \right)}{\Gamma\left( 1+\frac 2 p \right)} \frac{s^2} n.
\end{align*}
The last inequality holds since $\Gamma(x) \ge 0.885 > \frac 7 8$ for all $1 \le x \le 2$ and $\frac 7 8 \frac 1 {216} > \frac 1 {250}$.
Let $c := \frac 1 8 \frac{\Gamma\left( 1+\frac 4 p \right)}{\Gamma\left( 1+\frac 2 p \right)}$. Then $c \le \frac 1 8$ for all $p \ge 9$, since $\Gamma$ is decreasing in $[1,1.46]$. Further by Lemma \ref{lem1} (a) $c \ge \frac 3 {25}$. For $0 \le x \le \frac 1 4$ we have that
$$\ln(1-x) = - \sum_{j=1}^\infty \frac{x^j} j \ge -x - \frac 1 2 x^2 \sum_{k=0}^\infty x^k = -x - \frac 1 2 \frac{x^2}{1-x} \ge -x-\frac 2 3 x^2 $$
and hence for $s \le \sqrt{2 n}$ with $x:=c \frac{s^2} n \le \frac 1 4$ and $\exp(-y) \ge 1 - y$,
\begin{align*}
    \left( 1-c \frac{s^2} n \right)^n &= \exp\left( n \ln\left( 1-c \frac{s^2} n \right) \right) \ge \exp\left( -c s^2 - \frac 2 3 c^2 \frac{s^4} n \right) \\ &\ge \exp\left( -c s^2 \right) \left( 1-\frac 2 3 c^2 \frac{s^4} n \right).
\end{align*}
Therefore, using $\int_0^{\sqrt {2n}} = \int_0^\infty - \int_{\sqrt {2n}}^\infty$ and
$$\int_0^\infty s \exp\left( -c s^2 \right) ds = \frac 1 {2 c}, \  \int_0^\infty s^5 \exp\left( -c s^2 \right) ds = \frac 1 {c^3}, $$
\begin{align}\label{eq2.5}
\int_0^{\sqrt {2n}} \gamma_p\left( \frac s {\sqrt n} \right)^n \ s \ ds & \ge \int_0^{\sqrt {2n}} \left( 1 - \frac 2 3 c^2 \frac{s^4} n \right) \exp\left( -c s^2 \right) \ s \ ds \nonumber \\
& = \frac 1 {2 c} \left( 1 - \frac 4 3 \frac 1 n \right) - \int_{\sqrt {2n}}^\infty \left( 1 - \frac 2 3 c^2 \frac{s^4} n \right) \exp\left( -c s^2 \right) \ s \ ds \nonumber \\
& \ge \frac 1 {2 c} \left( 1 - \frac 4 3 \frac 1 n \right) = 4 \frac{\Gamma\left( 1+\frac 2 p \right)}{\Gamma\left( 1+\frac 4 p \right)} \left( 1 - \frac 4 3 \frac 1 n \right)
\end{align}
for all $n \ge 16$ since
$$ -\int_{\sqrt {2n}}^\infty \left( 1 - \frac 2 3 c^2 \frac{s^4} n \right) \exp\left( -c s^2 \right) \ s \ ds = + \frac{\exp(-2 c n)}{6 c n} \left( 8 c^2 n^2 + 8 c n - 3 n + 4 \right) $$
is positive: the factor $8c^2n^2+8cn-3n+4$ increases with $c$ and is positive for $c= \frac 3 {25}$ and $n \ge 16$.  \\

By Gradshteyn, Ryszik \cite[8.479]{GR}, we have $|J_1(x)| \le \frac{\sqrt{\frac 2 \pi}}{\left(x^2-1\right)^{\frac 1 4}}$. This is $< \frac 1 2$ for all $x \ge 3$. The smallest positive zero of $J_1$ is $x_1 \simeq 3.812$. Thus for $x \in (0,3]$, $J_1(x) > 0$. The derivative $J_1'(x) = J_0(x) - \frac{J_1(x)} x$ has exactly one zero $x_0$ in $[0,3]$, $x_0 \simeq 1.8412$. Hence the absolute maximum of $|J_1(x)|$ for $x \ge 0$ satisfies $|J_1(x)| \le J_1(x_0) \le M:=0.5819$. By the proof of Proposition \ref{prop2}, \\ $\gamma_p(x) = \frac{2 c_p^{-1}} x \int_0^\infty J_1(xr) r^p \exp(-r^p) dr$ for $x>0$, where $c_p = \frac{\Gamma(1+\frac 2 p)} p$. Thus
$$|\gamma_p(x)| \le  \frac{2 c_p^{-1} M} x \int_0^\infty r^p \exp(-r^p) dr = \frac{2 M} x \frac{\Gamma\left( 1+\frac 1 p \right)}{\Gamma\left( 1+\frac 2 p \right)}. $$
By Lemma \ref{lem1} (b), $g(p) := \frac{\Gamma\left( 1+\frac 1 p \right)}{\Gamma\left( 1+\frac 2 p \right)}$ is decreasing for $p \ge 9$ and $g(p) \le g(9) < 1.0377$. We conclude that
$|\gamma_p(x)| \le \frac{2 M 1.0377} x < \frac{1.2077} x$ for all $x >0$ and $p \ge 9$. This implies the tail estimate
\begin{align}\label{eq2.6}
\int_{\sqrt{2n}}^\infty \left| \gamma_p\left( \frac s {\sqrt n} \right) \right|^n \ s \ ds & = n \int_{\sqrt 2}^\infty |\gamma_p(x)|^n \ x \ dx \le n \ 1.2077^n \int_{\sqrt 2}^\infty x^{-n + 1} dx \nonumber \\
& = \frac{2 n}{n-2} \left(\frac{1.2077}{\sqrt 2}\right)^n \le \frac{2 n}{n-2} 0.854^n.
\end{align}

We conclude from \eqref{eq2.4} and \eqref{eq2.6}, using $\Gamma\left( 1+\frac 4 p \right) \le \Gamma\left( 1+\frac 2 p \right)$ for $p \ge 9$ as well as \eqref{eq2.5} for $n \ge 16$ that
\begin{align*}
A_{n,p}\left( a^{(n)} \right) & = \Gamma\left( 1+\frac 2 p \right) \frac 1 2 \left( \int_0^{\sqrt{2n}} + \int_{\sqrt{2n}}^\infty \right) \ \gamma_p\left( \frac s {\sqrt n} \right)^n \ s \ ds \\
& \ge \Gamma\left( 1+\frac 2 p \right) \frac 1 2 \left( 4 \frac{\Gamma\left( 1+\frac 2 p \right)}{\Gamma\left( 1+\frac 4 p \right)} \left( 1 - \frac 4 3 \frac 1 n \right) - \frac {2 n} {n-2} 0.854^n \right) \\
& \ge 2 \frac{\Gamma\left( 1+\frac 2 p \right)^2}{\Gamma\left( 1+\frac 4 p \right)} \left(1 - \frac 4 3 \frac 1 n -  \frac 1 2 \frac n {n-2} 0.854^n \right).
\end{align*}
By Corollary \ref{cor1}, $A_{n,p}\left( a^{(2)} \right) = 2^{1-\frac 2 p}$. Therefore $A_{n,p}\left( a^{(n)} \right) > A_{n,p}\left( a^{(2)} \right)$ will hold provided that
$$F(p,n) := \frac{\left( 2^{\frac 1 p} \Gamma\left( 1+\frac 2 p \right) \right)^2}{\Gamma\left( 1+\frac 4 p \right)} \left(1 - \frac 4 3 \frac 1 n -  \frac 1 2 \frac n {n-2} 0.854^n \right) > 1 $$
is satisfied. By Lemma \ref{lem1} (c) a sufficient condition for this is that
$$G(p,n) := \left(1 + \frac {2 \ln 2} p - \frac{\frac 2 3 \pi^2 - 2 (\ln 2)^2}{p^2} + \frac 4 {p^3} \right) \left(1 - \frac 4 3 \frac 1 n -  \frac 1 2 \frac n {n-2} 0.854^n \right) > 1 $$
holds. For $p \ge 9$ and $n \ge \frac 5 2 p > 22$ we have that $\frac 4 3 \frac 1 n + \frac 1 2 \frac n {n-2} 0.854^n < \frac 8 {15} \frac 1 p + \frac{11}{20} 0.854^{\frac 5 2 p} < \frac 7 {10} \frac 1 p$. The last inequality is equivalent to $p \ 0.854^{\frac 5 2 p} < \frac{10}{33}$. Note that $p \ 0.854^{\frac 5 2 p}$ is decreasing in $p \ge 9$. The last inequality is correct for $p=9$ and hence for all $p \ge 9$. Thus $1 - \frac 4 3 \frac 1 n -  \frac 1 2 \frac n {n-2} 0.854^n > 1 - \frac{0.7} p$ for $p \ge 9$ and $n \ge \frac 5 2 p$. Further $2 \ln 2- \frac{\frac 2 3 \pi^2 - 2 (\ln 2)^2}p + \frac 4 {p^2} \ge 0.81$ for all $p \ge 9$. Hence for all $p \ge 9$ and $n \ge \frac 5 2 p$
$$G(p,n) \ge \left( 1+\frac{0.81} p \right)\left( 1-\frac{0.7} p \right) > 1. $$
The last inequality is equivalent to $\frac{0.11} p - \frac{0.81 \cdot 0.7}{p^2} > 0$ or $p > \frac{0.81 \cdot 0.7}{0.11} \simeq 5.15$, thus satisfied for $p \ge 9$. Similarly, we have for all $p \ge 140$ and $n \ge p$ that
$$G(p,n) \ge \left( 1+\frac{1.3464} p \right)\left( 1-\frac{1.3334} p \right) > 1. $$
This proves $A_{n,p}\left( a^{(n)} \right) > A_{n,p}\left( a^{(2)} \right)$ for $p \ge 9$ and $n \ge \frac 5 2 p$ as well as for $p \ge 140$ and $n \ge p$. \\

Similar to the lower estimate for $A_{n,p}\left( a^{(n)} \right)$, one may prove an upper estimate up to small error terms in $n$, leading to
$$\lim_{n \to \infty} A_{n,p}\left( a^{(n)} \right) = \frac{2 \ \Gamma\left( 1+\frac 2 p \right)^2}{\Gamma\left( 1+\frac 4 p \right)}, $$
which was also derived by the central limit theorem.       \hfill $\Box$

\section{Proof of Theorem \ref{th2}}

We may assume that $a_1 \geq a_2 \geq \ldots \geq a_n \geq 0$. Denote $c_1 = 1520$, $c_2 = 2 \cdot 10^{41}$ and $\delta(a) = \left| a - a^{(2)} \right|^2 = 2 - \sqrt{2}(a_1 + a_2)$. We shall follow closely the proof of \cite[Theorem 1]{ENT}, making necessary adjustments to the complex setting. As in that proof, we shall consider two cases: when $\delta(a)$ is large and when $\delta(a)$ is small.

\subsection{The vector $a$ is far from the extremizer.}

Suppose $\sqrt{\delta(a)} \geq \frac{c_2}{p}$. We have
\begin{align*}
    \sum_{j=1}^na_j^4 \geq \frac{\left( \sum_{j=1}^na_j^2 \right)^2}{n} = \frac{1}{n} \geq \frac{c_1}{p}.
\end{align*}
Thus, using Proposition \ref{p-lipschitz}, \cite[Theorem 1]{GTW} and $2^{1-x} \geq 2(1 - x\log 2) \geq 2 - 2x$ for $x > 0$, we get
\begin{align*}
    A_{n,p}(a) &\leq A_{n,\infty}(a) + |A_{n,p}(a) - A_{n,\infty}(a)| < \frac{16}{p} + 2 - \min\left\{ 10^{-40}\sqrt{\delta(a)}, \frac{1}{76n} \right\} \\ &\leq 2 + \frac{16}{p} - \frac{20}{p} \leq 2^{1 - \frac 2 p}.
\end{align*}

\subsection{The vector $a$ is close to the extremizer.}

Suppose $\sqrt{\delta(a)} < \frac{c_2}{p}$. Then $\frac{1}{\sqrt{2}} - \frac{c_2}{p} \leq a_2 \leq a_1 \leq \frac{1}{\sqrt{2}} + \frac{c_2}{p}$. Our aim is to show that
\begin{align*}
    \E\left| \sum_{j=1}^na_jR_j\xi_j \right|^{-2} \leq \E\left| \frac{R_1\xi_1 + R_2\xi_2}{\sqrt{2}} \right|^{-2} = \frac{2^{1-\frac 2 p}}{\Gamma\left( 1+\frac 2 p \right)} =: C'_p.
\end{align*}
We shall proceed by induction on $n$. The basic cases of $n = 2$ and $\sqrt{\delta(a)} \geq \frac{c_2}{p}$ are easy or already done. Note that $A_{2, p}(a) = \frac{1}{\|(a_1, a_2)\|_p^2}$, which can be computed in the same way as we computed $A_{2, p}\left( a^{(2)} \right)$ in Corollary \ref{cor1}. Let us pass to the inductive step. \\

Let $X = a_1R_1\xi_1 + a_2R_2\xi_2$, $Y = \sum_{j=3}^na_jR_j\xi_j$ and assume $Y \neq 0$ (otherwise the statement is trivial). Then $X$ and $Y$ are independent rotationally invariant random vectors in $\R^4$. By the inductive hypothesis we have
\begin{align*}
    \E|Y|^{-2} = \frac{1}{1 - a_1^2 - a_2^2}\E\left| \frac{Y}{\sqrt{1 - a_1^2 - a_2^2}} \right|^{-2} \leq \frac{C'_p}{1 - a_1^2 - a_2^2} =: \alpha^{-2}.
\end{align*}
Using this, \cite[Lemma 2]{GTW} and the concavity of $t \mapsto \min\left\{|X|^{-2}, t\right\}$, we find
\begin{align*}
    \E|X + Y|^{-2} \leq \E\min\left\{ |X|^{-2}, \alpha^{-2} \right\} = \E|X|^{-2} - \E\left( |X|^{-2} - \alpha^{-2} \right)_+.
\end{align*}
As $\E|X|^{-2} = \frac{1}{\|(a_1, a_2)\|_p^2\Gamma\left( 1+\frac 2 p \right)}$, the statement of Theorem \ref{th2} reduces to
\begin{align}\label{thmax}
    \E\left( |X|^{-2} - \alpha^{-2} \right)_+ \geq \frac{1}{\|(a_1, a_2)\|_p^2\Gamma\left( 1+\frac 2 p \right)} - C'_p = C'_p\left( \frac{2^{\frac 2 p-1}}{\|(a_1, a_2)\|_p^2} - 1 \right).
\end{align}

Assume $\|(a_1, a_2)\|_p < 2^{\frac 1 p - \frac 1 2}$, the other case is trivial. We begin with an upper estimate of the right hand side of \eqref{thmax}. For $p > 200$ we have $\frac 2 p < 100$ and thus, since $\Gamma(1 + x) \ge 1 - \gamma x$ for $x > -1$ by the convexity of $\Gamma$,
\begin{align*}
    C'_p = \frac{2^{1-\frac 2 p}}{\Gamma\left(1 + \frac 2 p\right)} \le \frac{2}{1 - \frac{2\gamma}{p}} \le \frac{2}{1 - \frac{\gamma}{100}} \le 2.03.
\end{align*}
Using the above, $\|(a_1, a_2)\|_p \geq 2^{\frac 1 p - \frac 1 2}\|(a_1, a_2)\|_2$ and $a_1^2 + a_2^2 \geq 1 - \frac{4c_2}{p} \geq 0.99$ for $p > 10^{56}$ we have
\begin{align}\label{thmaxr}
    C'_p\left( \frac{2^{\frac 2 p-1}}{\|(a_1, a_2)\|_p^2} - 1 \right) \leq C'_p\left( \frac{1}{a_1^2+a_2^2} - 1 \right) = \frac{\left(C'_p\right)^2\alpha^2}{a_1^2 + a_2^2} \leq 4.2\alpha^2.
\end{align}\\
To prove the inequality $\E\left( |X|^{-2} - \alpha^{-2} \right)_+ \ge 4.2\alpha^2$, we consider the event \\
$\mathcal{E} = \{R_1 \leq 1, |R_1 - R_2| < \alpha, |a_1\xi_1 + a_2\xi_2| < \frac{1}{4}\alpha\}$. On $\mathcal{E}$ we have
\begin{align*}
    |X| &= |a_1R_1\xi_1 + a_2R_2\xi_2| \leq |a_1R_1\xi_1 + a_2R_1\xi_2| + |a_2R_2\xi_2 - a_2R_1\xi_2| \\ &= R_1|a_1\xi_1 + a_2\xi_2| + a_2|R_2 - R_1| \leq \frac{1}{4}\alpha + 0.73\alpha = 0.98\alpha,
\end{align*}
where in the last inequality we used $a_2 \le \frac{1}{\sqrt{2}} + \frac{c_2}{p} \le 0.73$, which holds true for $p > 10^{43}$. Hence
\begin{align}\label{thmaxl}
    \E\left( |X|^{-2} - \alpha^{-2} \right)_+ &\geq \E\left( |X|^{-2} - \alpha^{-2} \right)\mathbf{1}_{\mathcal{E}} \geq 0.04\alpha^{-2}\P(\mathcal{E}) \notag \\ &= 0.04\alpha^{-2}\P(R_1 \leq 1, |R_1 - R_2| < \alpha)\P\left( |a_1\xi_1 + a_2\xi_2| < \frac{1}{4}\alpha \right) \notag \\ &=: 0.04\alpha^{-2}P_1P_2.
\end{align}\\
Treating $S^3$ as the uniform sphere in $\C^2$ and $\xi_1$, $\xi_2$ as $\C^2$-valued vectors, we obtain that $|a_1\xi_1 + a_2\xi_2|^2 = a_1^2 + a_2^2 + a_1a_2(\langle \xi_1, \xi_2 \rangle_{\C} + \langle \xi_2, \xi_1 \rangle_{\C})$ has the same distribution as $a_1^2 + a_2^2 +2a_1a_2\re D$ for $D \sim \text{Unif}(\D)$, since $\langle \xi_1, \xi_2 \rangle \sim \text{Unif}(\D)$ and $\langle \xi_2, \xi_1 \rangle = \overline{\langle \xi_1, \xi_2 \rangle}$. Thus we have
\begin{align*}
    P_2 = \P\left(\re D < \frac{\frac{\alpha^2}{16} - a_1^2 - a_2^2}{2a_1a_2}\right).
\end{align*}
Using \cite[Lemma 10]{ENT} with $c = c_2$ we obtain
\begin{align*}
    \frac{\frac{\alpha^2}{16} - a_1^2 - a_2^2}{2a_1a_2} &= \frac{\alpha^2}{32a_1a_2} - \frac{(a_1 - a_2)^2}{2a_1a_2} - 1 \geq \frac{\alpha^2}{32a_1a_2} - 1 - 13.3225\frac{c_2}{p-2}\left(C'_p\right)^2\alpha^2 \\ &\geq \frac{\alpha^2}{32\left( \frac{1}{\sqrt{2}} + \frac{c_2}{p} \right)^2} - 1 - \frac{100c_2\alpha^2}{p-2} \geq \frac{\alpha^2}{32} - 1,
\end{align*}
provided that $\frac{1}{\sqrt{2}} + \frac{c_2}{p} < 0.75$ and $p-2 > 3 \cdot 10^4c_2 = 6 \cdot 10^{45}$, which implies $\frac{1}{32\left(\frac{1}{\sqrt{2}} + \frac{c_2}{p}\right)^2} - \frac{100c_2}{p-2} \ge \frac{1}{32}$. By the above and since the density $h(t) = \frac{2}{\pi}\sqrt{1-t^2}$ of $\re D$ is concave on $[-1, 1]$, we have
\begin{align}\label{P2}
    P_2 &\geq \P\left( \re D < \frac{\alpha^2}{32} - 1 \right) = \int_{-1}^{\frac{\alpha^2}{32} - 1}\frac{2}{\pi}\sqrt{1-t^2}dt \geq \frac{1}{\pi}\sqrt{1 - \left( \frac{\alpha^2}{32} - 1 \right)^2}\frac{\alpha^2}{32} \notag \\ &= \frac{1}{\pi}\sqrt{\frac{\alpha^2}{16} - \frac{\alpha^4}{2^{10}}}\frac{\alpha^2}{32} \geq 2^{-9}\alpha^3,
\end{align}
as we have $\frac{1}{C'_p} \le 2^{\frac 2 p - 1} \le 0.51$ for $p > 100$. Hence $\alpha^2 \leq 0.51 \cdot \frac{4c_2}{p} \leq 10^{-3}$ for $p > 10^{56}$ and thus $\frac{\alpha^2}{16} - \frac{\alpha^4}{2^{10}} \geq \frac{\alpha^2}{16}\left(
1 - 40^{-3} \right)$.\\
Next we bound $P_1$ from below. Let $g(r) = c_p^{-1}r^{p+1}\exp(-r^p)$ for $r > 0$ be the density of $R_1$. We have $g(1) = \frac{p}{\Gamma\left( 1+\frac 2 p \right)}e^{-1} > \frac{p}{4}$ and
\begin{align*}
    g\left( 1 - \frac{1}{2p} \right) &= \frac{p}{\Gamma\left( 1+\frac 2 p \right)}\left( 1 - \frac{1}{2p} \right)^{p+1}e^{-\left( 1 - \frac{1}{2p} \right)^p} > \frac{p-1}{2}e^{-e^{-\frac{1}{2}}} > \frac{p}{4},
\end{align*}
where the first inequality follows from the inequalities $\frac{1}{\Gamma\left(1 + \frac 2 p\right)} > 1$ for $p > 2$, $p\left(1 - \frac{1}{2p}\right)^{p+1} \ge p\left(1 - \frac{p+1}{2p}\right) = \frac{p-1}{2}$ and $\left(1 - \frac{1}{2p}\right)^p < e^{-\frac 1 2}$, and the second inequality follows by $p > 25$ and the numerical approximation $e^{-e^{-\frac 1 2}} > 0.54$. Together with log-concavity of $g$ these lower bounds give $g(r) \geq \frac{p}{4}\mathbf{1}_{1-\frac{1}{2p} \leq r \leq 1}$. Hence
\begin{align}\label{P1}
    P_1 &\geq \int_{x \leq 1, |x - y| < \alpha}\frac{p^2}{16}\mathbf{1}_{\left[1-\frac{1}{2p}, 1\right] \times \left[1-\frac{1}{2p}, 1\right]}(x, y)dxdy = \begin{cases}
        \frac{1}{64} &\textrm{for } \alpha > \frac{1}{2p} \notag \\
        \frac{p^2\alpha}{16}\left(\frac{1}{p} - \alpha\right) &\textrm{for } \alpha \leq \frac{1}{2p}
    \end{cases} \\ &\geq 25 \cdot 2^9 \cdot 4.2\alpha,
\end{align}
where the last inequality uses the fact that $\frac{p^2}{16}\left(\frac{1}{p} - \alpha\right) \geq \frac{p}{32} \ge 25 \cdot 2^9 \cdot 4.2\alpha$ for $\alpha \leq \frac{1}{2p}$ and $\alpha \le \sqrt{0.51 \cdot \frac{4c_2}{p}} \le 10^{-7} \le \frac{1}{64 \cdot 25 \cdot 2^9 \cdot 4.2}$ for $p > 10^{56}$.

Putting \eqref{P2}, \eqref{P1} into \eqref{thmaxl} and using \eqref{thmaxr}, we conclude that \eqref{thmax} is satisfied. This ends the proof of Theorem \ref{th2}. \hfill $\Box$\\

\textbf{Acknowledgement.} We wish to thank Piotr Nayar for helpful discussions and the anonymous referee for questions and suggestions helping to improve the paper.

\vspace{1cm}

\vspace*{1cm}

\noindent Institute of Mathematics \\
University of Warsaw \\
02-097, Warsaw, Poland \\
jj406165@mimuw.edu.pl \\

\noindent Mathematisches Seminar \\
Universit\"at Kiel \\
24098 Kiel, Germany \\
hkoenig@math.uni-kiel.de \\

\end{document}